\newtheorem{theorem}{Theorem}[section]
\newtheorem{prop}{Proposition}[section]
\newcommand{\R}{\mathbb{R}}
\newcommand{\Rd}{\mathbb{R}^d}
\newcommand{\nN}{n \in \mathbb{N}}
\newcommand{\N}{\mathbb{N}}
\newcommand{\C}{\mathbb{C}}
\newcommand{\ds}{\displaystyle}
\newcommand{\tr}{\rm{tr}}
\newcommand{\bean}{\begin{eqnarray*}}
\newcommand{\eean}{\end{eqnarray*}}
\newcommand{\la}{\langle}
\newcommand{\ra}{\rangle}
\newcommand{\Z}{\mathbb{Z}}
\newcommand{\Q}{\mathbb{Q}}
\newcommand{\A}{\mathbb{A}}
\newcommand{\J}{\mathbb{J}}
\newcommand{\G}{\widehat{G}}
\date{}
\begin{document}

\title[Probabilistic Trace and Poisson Summation Formulae]{Probabilistic Trace and Poisson Summation Formulae on Locally Compact Abelian Groups}

\author{David Applebaum}

\address{School of Mathematics and Statistics, University of
Sheffield, Hicks Building, Hounsfield Road, Sheffield,
England, S3 7RH}

\email{D.Applebaum@sheffield.ac.uk }

\begin{abstract}We investigate convolution semigroups of probability measures with continuous densities on locally compact abelian groups, which have a discrete subgroup such that the factor group is compact. Two interesting examples of the quotient structure are the $d$--dimensional torus, and the ad\`{e}lic circle. Our main result is to show that the Poisson summation formula for the density can be interpreted as a probabilistic trace formula, linking values of the density on the factor group to the trace of the associated semigroup on $L^{2}$-space. The Gaussian is a very important example. For rotationally invariant $\alpha$-stable densities, the trace formula is valid, but we cannot verify the Poisson summation formula. To prepare to study semistable laws on the ad\`{e}les, we first investigate these on the $p$--adics, where we show they have continuous densities which may be represented as series expansions. We use these laws to construct a convolution semigroup on the ad\`{e}les whose densities fail to satisfy the probabilistic trace formula.

\vspace{5pt}

{\it MSC 2010}: Primary 60B15, Secondary 60E07, 11F85, 43A25, 11R56.

\vspace{5pt}

{\it Key Words and Phrases}: locally compact abelian group, discrete subgroup, Fourier transform, Poisson summation formula, convolution semigroup, $\alpha$-stable, $p$-adic number, ad\`{e}les, id\`{e}les, Riemann-Roch theorem, Bruhat-Schwartz space, semistable, Gel'fand-Graev gamma function.
\end{abstract}

\maketitle

\section{Introduction}

The classical Poisson summation formula is a well-known result from elementary Fourier analysis. It states that for a suitably well--behaved function $f:\R \rightarrow \C$ (and typically $f$ is in the Schwartz space of rapidly decreasing functions), we have
\begin{equation} \label{PSi1}
\sum_{n \in \Z}f(n) = \sum_{n \in \Z}\widehat{f}(n),
\end{equation}
where $\widehat{f}$ is the Fourier transform of $f$ (see e.g. \cite{SS} p.154-6 or \cite{Katz} p.161). 
The importance of (\ref{PSi1}) can be seen from the fact that if $f$ is taken to be a suitable Gaussian, then (\ref{PSi1}) yields the celebrated functional equation for Jacobi's theta function, which was brilliantly utilised by Riemann to establish the functional equation for the zeta function; this is itself then applied to analytically continue that function to a meromorphic function on the complex plane (see e.g. the very accessible account in \cite{TMF}).

The Poisson summation formula involves wrapping a function around the torus $\mathbb{T} = \R/\Z$, and so it is natural to generalise it to the case where we have a discrete subgroup $\Gamma$ of a locally compact abelian group $G$ such that the factor group $G/\Gamma$ is compact. In this case we obtain
\begin{equation} \label{PSi2}
\sum_{\gamma \in \Gamma}f(n) = \sum_{\chi \in \widehat{G/\Gamma}}\widehat{f}(\chi),
\end{equation}
where $\widehat{G/\Gamma}$ is the dual group of $G/\Gamma$. This general case turns out to be a very useful tool in algebraic number theory. In his 1950 thesis, Tate \cite{Tate} used a slight extension of (\ref{PSi2}), which he called the ``Riemann--Roch theorem'' to establish the analytic continuation of local zeta functions by using harmonic analytic techniques. In that case, $G$ is the ad\`{e}les group, to be denoted $\mathbb{A}$, $\Gamma$ is the rational numbers and $G/\Gamma$ is the so-called ``ad\`{e}lic circle''. In fact the classical Riemann--Roch theorem of algebraic geometry, for curves over finite fields, may itself be derived from Tate's formula (see section 7.2. of \cite{RV}). Its worth pointing out that the ad\`{e}les group is a very rich mathematical structure that plays an important role in the Langlands programme \cite{Gel}; it is also central to attempts to solve the Riemann hypothesis using non--commutative geometry \cite{CCM}, and has been used to model string amplitudes in physics \cite{BrFr}. A key feature of the ad\`{e}les that makes them so mathematically attractive is that they put all the different completions of the rational numbers that are induced by a norm, on an equal footing, i.e. the real numbers together with the collection of non-Archimedean $p$-adic number fields, where $p$ varies over the set of prime numbers.

The main purpose of this paper is to explore the interaction of the abstract Poisson summation formula (\ref{PSi2}) with probability theory. For this we need some probabilistic input, and that is provided by a convolution semigroup of probability measures $(\mu_{t}, t \geq 0)$ on $G$, which is such that for all $t > 0, \mu_{t}$ has a continuous density $f_{t}$. This is a very rich class of measures. On $\R^{d}$ it includes Gauss--Weierstrass heat kernels, and also the $\alpha$--stable laws which have a number of desirable properties, such as self--similarity and regularly varying tails. Our main result is to show that, when we consider it within the context of such convolution semigroups, (\ref{PSi2}) may be interpreted as a {\it probabilistic trace formula}:
\begin{equation} \label{PSi3}
F_{t}(e) = \tr(P_{t}),
\end{equation}
for each $t > 0$, where $F_{t}$ is the pushforward of $f_{t}$ to the density of a convolution semigroup on $G/\Gamma$, and $P_{t}$ is the associated Markov semigroup of trace--class operators acting in $L^{2}(G/\Gamma)$. Formulae of the form (\ref{PSi3}) have been established for a class of central convolution semigroups on compact Lie groups in \cite{App3}. Such formulae also arise in the study of heat kernels on compact Riemannian manifolds (see Corllary 3.2 on p.90) of \cite{Rose}). In fact, all of these results can be seen as special cases of Mercer's theorem in functional analysis (see e.g. \cite{EBD1}, Proposition 5.6.9., p.156).  In the classical case (\ref{PSi1}), we investigate examples of the probabilistic trace formula arising from both Gaussian and rotationally invariant $\alpha$-stable convolution semigroups. Only in the Gaussian case can we legitimately realise this as an instance of the Poisson summation formula, and it is conceivable that this is, in fact, the only convolution semigroup where that formula is valid.

In the second part of the paper, our goal is to begin the probabilistic investigation of the ``Riemann-Roch form'' of the Poisson summation formula. Here we can report only limited progress. We must find good examples of convolution semigroups on the ad\`{e}le group $\mathbb{A}$. We know of no previous probabilistic work in this context other than \cite{KaVM} and \cite{Yas,Urb,Yas4}, where the approach seems to be different. Since the group $\mathbb{A}$ is a restricted direct product of the additive group $\Q_{p}$ of $p$-adic rationals, where $p$ ranges over the set of prime numbers (including $p=\infty$, for the real numbers), we first turn our attention to the study of convolution semigroups on $\Q_{p}$. At this stage we should emphasise that we are working in the context of ``classical probability'' on $p$-adic groups, and not the $p$-adic probability theory of \cite{Kh}, where the probabilities are themselves $p$-adic numbers. We investigate a $\gamma$-semistable rotationally invariant convolution semigroup $(\rho_{t}^{(p)}, t \geq 0)$ that has been studied by a number of authors; see \cite{AK1,AK2,Yas1} for its construction as a Markov process by solving Kolmogorov's equations, \cite{Koch, Yas3, Yas2} for characterisation as a limit of sums of i.i.d. $p$-adic random variables, \cite{AE} for a construction using Dirichlet forms,  and \cite{HS,DMFT} for a Fourier analytic approach within the spirit of the current article. An account of the relationship between some of these different constructions may be found in \cite{AZ}. Here there is already an interesting departure from the Euclidean case, where the index of stability $\alpha$ is constrained by the restriction $0  < \alpha \leq 2$. The $p$-adic counterpart has index of stability $\gamma$ which can take any positive value. Our main result here is to show that, just as in Euclidean case, $\rho_{t}^{(p)}$ has a density $f_{t}^{(p)}$ for each $t > 0$. We also find a series expansion for $f_{t}^{(p)}$. In the final part of this section, we show how to use the $p$-adic distributions described above to construct a convolution semigroup on the ad\`{e}les, and we conclude with a calculation that demonstrates that its projection to the ad\`{e}lic circle fails to satisfy the probabilistic trace formula.

Our investigations yield three different types of behaviour for convolution semigroups of measures. Gaussian densities in Euclidean space satisfy the Poisson summation formula, which is a special case of the probabilistic trace formula. Stable densities satisfy the second of these, but not necessarily the first. Finally the densities we have constructed on the ad\`{e}les satisfy neither of these.

It is well--known that the Poisson summation formula is a special case of the Selberg trace formula with compact quotient (see e.g. \cite{Arth} p.9). Gangolli \cite{Gang} has given the latter a probabilistic interpretation by using wrapped heat kernels on symmetric spaces (see in particular, Proposition 4.6 therein). More recently, Albeverio et al. \cite{AKY} have established a $p$-adic analogue of the Selberg formula using the semistable process described above.  It would be interesting to extend their $p$-adic formula to the ad\`{e}les, and also to seek a more general probabilistic trace formula which subsumes all the cases discussed here.

\vspace{5pt}

{\it Notation and Basic Concepts.} If $X$ is a topological space, then ${\mathcal B}(X)$ is the Borel $\sigma$--algebra of $X$. All measures introduced in this paper will be assumed to be defined on $(X, {\mathcal B}(X))$, and to be regular (i.e. they are both inner and outer regular, and every compact subset of $X$ has finite mass). Note that this is the case for Haar measure on a locally compact abelian group.

If $X$ is locally compact, then $C_{c}(X)$ will denote the space of all continuous, real--valued functions defined on $X$ that have compact support. By the Riesz representation theorem, regular measures on $(X, {\mathcal B}(X))$ are in one-to-one correspondence with positive, linear functionals on $C_{c}(X)$.

Let $G$ be a locally compact, Hausdorff abelian group with neutral element $e$. We will write the group law in $G$ multiplicatively, except in examples where addition is natural. The Dirac mass at $e$ will be denoted $\delta_{e}$. If $\mu_{1}$ and $\mu_{2}$ are probability measures on $G$, their {\it convolution} is the unique probability measure $\mu_{1} * \mu_{2}$ such that for all $h \in C_{c}(X)$,
$$ \int_{G}h(x)(\mu_{1} * \mu_{2})(dx) = \int_{G}\int_{G}h(xy)\mu_{1}(dx)\mu_{2}(dy).$$
We say that a family $(\mu_{t}, t \geq 0)$ of probability measures on $G$ is an {\it i-convolution semigroup} if
\begin{enumerate}
\item $\mu_{s} * \mu_{t} = \mu_{s+t}$ for all $s,t \geq 0$,
\item $\lim_{t \rightarrow 0}\int_{G}h(x)\mu_{t}(dx) = \int_{G}h(x)\mu_{0}(dx)$, for all $h \in C_{c}(G)$.
\end{enumerate}
Note that $\mu_{0}$ is an idempotent measure, i.e. $\mu_{0} * \mu_{0} = \mu_{0}$, and the prefix ``i-'' is meant to indicate this. In the case where $\mu_{0} = \delta_{e}$, we say that we have a {\it convolution semigroup} of probability measures on $G$.
We say that a probability measure $\rho$ on $G$ is embeddable, if there exists an i-convolution semigroup $(\mu_{t}, t \geq 0)$ on $G$ with $\rho = \mu_{1}$. If $G$ is arcwise connected, then every infinitely divisible measure on $G$ is embeddable (see Theorem 4.5 in \cite{Hey1}).

If $\mu$ is a measure on $G$ and $f:G \rightarrow G$ is measurable, then $f\mu$ is the (pushforward) measure on $G$ given by $(f\mu)(A):= \mu(f^{-1}(A))$, for all $A \in {\mathcal B}(G)$.

If $x = (x_{1}, \ldots, x_{d}) \in \R^{d}$, then the Euclidean norm of $x$ is $|x| = \left(\sum_{i=1}^{d}x_{i}^{2}\right)^{1/2}$.

\section{The Poisson Summation Formula}

Let $G$ be a locally compact, Hausdorff abelian group, and $\Gamma$ be a discrete subgroup of $G$ such that the factor group of left cosets, $G/\Gamma$, is compact. If $x \in G$ we often use the notation $[x]$ for the coset $x\Gamma$. The quotient map from $G$ to $G/\Gamma$ will be denoted by $\pi$. It is a continuous homomorphism.  Given a Haar measure on $G$, which we write informally as $dx$, and fixing counting measure (which is a Haar measure) on $\Gamma$, then it is well-known that there exists a Haar measure $d[x]$ on $G/\Gamma$ so that for all  $h \in L^{1}(G)$ we have
\begin{equation} \label{intform}
\int_{G}h(x)dx = \int_{G/\Gamma}\sum_{\gamma \in \Gamma}h(x\gamma)d[x],
\end{equation}
(see e.g. Reiter \cite{Rei}, p.69-71).

Let $\widehat{G}$ be the dual group of $G$. Then $\G$ is itself a locally compact abelian group, when equipped with the compact--open topology, which we may also equip with a Haar measure. The Fourier transform of $h \in L^{1}(G)$ is $\widehat{h}: \G \rightarrow \C$ which is given by
\begin{equation} \label{FT}
\widehat{h}(\chi) = \int_{G}h(x)\chi(x^{-1})dx,
\end{equation}
for all $\chi \in \G$. If $h \in L^{1}(G)$ is continuous and  $\widehat{h} \in L^{1}(\G)$, then Fourier inversion holds, and for all $x \in G$, 
\begin{equation} \label{FI}
h(x) = \int_{\G}\widehat{h}(\chi)\chi(x)d\chi.
\end{equation}
For details see e.g. Folland \cite{Foll} Theorem 4.3.3, p.111.
If $G$ is compact, then $\G$ is discrete, and (\ref{FI}) is a sum. In particular, it is known that $\widehat{G/\Gamma} \simeq \Gamma^{\perp}:= \{\chi \in \G; \chi(\gamma) = 1~\mbox{for all}~\gamma \in \Gamma\}$, and we will find it convenient to identify these two groups in the sequel.

From now on, we assume that $h \in L^{1}(G)$ is such that:

\begin{enumerate}
\item[(A1).] The series $\sum_{\gamma \in \Gamma}h(x\gamma)$ converges absolutely and uniformly for all $x \in G$,

\item[(A2).] The series $\sum_{\chi \in \Gamma^{\perp}}\widehat{h}(\chi)$ converges absolutely.

\end{enumerate}

The following is based on Lang \cite{La} pp.291-2. We include it for the convenience of the reader, noting that we will make use of some aspects of the proof within the sequel.

\begin{theorem} \label{PSF} [The Poisson Summation Formula].
If $h:G \rightarrow \C$ is continuous and integrable, then for all $x \in G$
\begin{equation} \label{PSF1}
\sum_{\gamma \in \Gamma}h(x\gamma) = \sum_{\chi \in \Gamma^{\perp}}\widehat{h}(\chi)\chi([x]).
\end{equation}
In particular,
\begin{equation} \label{PSF2}
\sum_{\gamma \in \Gamma}h(\gamma) = \sum_{\chi \in \Gamma^{\perp}}\widehat{h}(\chi).
\end{equation}
\end{theorem}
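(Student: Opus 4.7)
The natural strategy is to form the $\Gamma$-periodization of $h$ and unmask it as a function on the compact quotient $G/\Gamma$, then invoke Fourier inversion on $G/\Gamma$ whose dual is $\Gamma^{\perp}$.

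First, define $H : G \to \C$ by $H(x) := \sum_{\gamma \in \Gamma} h(x\gamma)$. Assumption (A1) ensures this series converges absolutely and uniformly on $G$, so $H$ is continuous. It is manifestly right-$\Gamma$-invariant, hence descends to a continuous function $\widetilde{H}$ on the compact group $G/\Gamma$. The plan is now to identify the Fourier coefficients of $\widetilde{H}$ with respect to the dual group $\widehat{G/\Gamma} \simeq \Gamma^{\perp}$.

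Second, for each $\chi \in \Gamma^{\perp}$, since $\chi$ is trivial on $\Gamma$ it factors through $G/\Gamma$, and the Fourier coefficient is
\begin{equation*}
\widehat{\widetilde{H}}(\chi) = \int_{G/\Gamma} \widetilde{H}([x])\, \chi([x]^{-1})\, d[x].
\end{equation*}
Applying the integration formula (\ref{intform}) to unfold the sum inside, and using $\chi(x\gamma) = \chi(x)$ for $\gamma \in \Gamma$, yields
\begin{equation*}
\widehat{\widetilde{H}}(\chi) = \int_{G/\Gamma} \sum_{\gamma \in \Gamma} h(x\gamma)\chi((x\gamma)^{-1})\, d[x] = \int_{G} h(x)\chi(x^{-1})\, dx = \widehat{h}(\chi).
\end{equation*}
The interchange of summation and integration is legitimate by (A1), which gives uniform convergence on the compact fundamental domain.

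Third, assumption (A2) says the Fourier series $\sum_{\chi \in \Gamma^{\perp}} \widehat{h}(\chi)\chi([x])$ converges absolutely and uniformly to a continuous function on $G/\Gamma$. Since its Fourier coefficients agree with those of $\widetilde{H}$, and two continuous functions on a compact abelian group with identical Fourier coefficients must coincide (by the Stone--Weierstrass theorem applied to trigonometric polynomials, or equivalently by Fourier inversion (\ref{FI}) on $G/\Gamma$), we conclude that
\begin{equation*}
\sum_{\gamma \in \Gamma} h(x\gamma) = H(x) = \widetilde{H}([x]) = \sum_{\chi \in \Gamma^{\perp}} \widehat{h}(\chi)\chi([x]),
\end{equation*}
which is (\ref{PSF1}). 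Setting $x = e$ and noting $\chi([e]) = 1$ for every $\chi$ gives (\ref{PSF2}).

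The only genuinely delicate point is the identification of Haar measures: one must use the same normalization of $d[x]$ on $G/\Gamma$ in both the unfolding step and in the Fourier inversion formula, together with the correctly dual normalization on $\Gamma^{\perp} \simeq \widehat{G/\Gamma}$. This is precisely where the compatibility of Haar measures coming from (\ref{intform}) is essential, and it is the step that requires the most care; once it is in place, the rest of the argument is essentially a restatement of Fourier inversion on the compact group $G/\Gamma$.
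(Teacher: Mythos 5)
Your proof is correct and follows essentially the same route as the paper: periodize $h$ to a continuous function on the compact quotient $G/\Gamma$, identify its Fourier coefficients with $\widehat{h}(\chi)$ via the unfolding formula (\ref{intform}), and conclude by Fourier inversion on $G/\Gamma$. The only cosmetic difference is that you phrase the final step as uniqueness of Fourier coefficients for continuous functions rather than citing the inversion formula (\ref{FI}) directly; both hinge on (A2) in exactly the same way.
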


\begin{proof} Define $H:G/\Gamma \rightarrow \C$ by
$$ H([x]) = \sum_{\gamma \in \Gamma}h(x\gamma),$$
for all $x \in G$. By uniformity of convergence of the series, $H$ is continuous at $[x]$, and so by Fourier inversion (\ref{FI}),
$$ H([x]) = \sum_{\chi \in \widehat{G/\Gamma}}\widehat{H}(\chi)\chi([x]) = \sum_{\chi \in \Gamma^{\perp}}\widehat{H}(\chi)\chi(x).$$
Now using (\ref{FT}) and (\ref{intform}) we deduce that for all $\chi \in \Gamma^{\perp}$
\bean \widehat{H}(\chi) & = & \int_{G/\Gamma}H([x])\chi([x^{-1}])d[x]\\
& = & \int_{G/\Gamma}\sum_{\gamma \in \Gamma}h(x\gamma)\chi(\gamma^{-1}x^{-1})d[x]\\
& = & \int_{G}h(x)\chi(x^{-1})dx = \widehat{h}(\chi), \eean
and the result follows. \end{proof}

\section{A Probabilistic Trace Formula}

Let $f: G \rightarrow \R$ be a probability density function (pdf, for short), so that $f \geq 0$ and $\int_{G}f(g)dg = 1$. We define its {\it $\Gamma$ - periodisation} to be the mapping $F: G/\Gamma \rightarrow \R$ given for all $x \in G$  by
\begin{equation} \label{per}
F([x]) = \sum_{\gamma \in \Gamma} f(x\gamma),
\end{equation}

\noindent provided that the series converges for all $x \in G$.

It follows from (\ref{intform}) that $F$ is also a pdf. In fact we can say a little more. Recall that if $\mu$ is a (finite) measure on $(G, {\mathcal B}(G))$, then $\tilde{\mu}: = \mu \circ \pi^{-1}$ is a (finite) measure on $(G/\Gamma, {\mathcal B}(G/\Gamma))$. Moreover, if $h$ is a mapping from $G/\Gamma$ to $\C$, then $h_{\pi}: = h \circ \pi$ is a $\Gamma$-invariant mapping from $G$ to $\C$, which is continuous if $h$ is. Now suppose that $\mu$ is a probability measure on $G$ having a pdf $f$. If $\mu$ is $\Gamma$-invariant, then so is $f$ and it is easy to see that $\tilde{\mu}$ is absolutely continuous with respect to the induced Haar measure, with density $\tilde{f}(x\Gamma): = f(x)$, for all $x \in G$. Clearly if $f$ is $\Gamma$-invariant, then $\sum_{\gamma \in \Gamma}f(x\gamma)$ diverges. On the other hand, we have:

\begin{prop} \label{dchoice} If $\mu$ has a density $f$  and the series $\sum_{\gamma \in \Gamma}f(\cdot\gamma)$ converges to an $L^{1}$-function $\tilde{f}$, then $\tilde{f}$ is the density of $\tilde{\mu}$. 
\end{prop}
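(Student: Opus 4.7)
The plan is to test both measures against an arbitrary $h \in C_{c}(G/\Gamma)$ and invoke the Riesz representation theorem (which identifies regular measures with positive linear functionals on $C_{c}$) to conclude equality. This reduces the proposition to a single integral identity.

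First I would compute the left-hand side using the definition of the pushforward:
$$ \int_{G/\Gamma}h([x])\,\tilde{\mu}(d[x]) \;=\; \int_{G}h(\pi(x))\,\mu(dx) \;=\; \int_{G}h_{\pi}(x)f(x)\,dx, $$
where the last equality uses that $\mu$ has density $f$. Since $h$ is bounded (having compact support) and $f \in L^{1}(G)$, the integrand $h_{\pi}f$ is in $L^{1}(G)$, which is what I need to legitimately apply the quotient integration formula (\ref{intform}).

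Next, since $h_{\pi}$ is $\Gamma$-invariant by construction, applying (\ref{intform}) to the function $x \mapsto h_{\pi}(x)f(x)$ yields
$$ \int_{G}h_{\pi}(x)f(x)\,dx \;=\; \int_{G/\Gamma}\sum_{\gamma \in \Gamma}h_{\pi}(x\gamma)f(x\gamma)\,d[x] \;=\; \int_{G/\Gamma}h([x])\sum_{\gamma \in \Gamma}f(x\gamma)\,d[x], $$
where $h_{\pi}$ was pulled out of the sum using $h_{\pi}(x\gamma)=h([x])$. By hypothesis the inner series equals $\tilde{f}([x])$ for (almost) every $x$, so the right-hand side is $\int_{G/\Gamma}h([x])\tilde{f}([x])\,d[x]$. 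Combining with the first display gives
$$ \int_{G/\Gamma}h([x])\,\tilde{\mu}(d[x]) \;=\; \int_{G/\Gamma}h([x])\tilde{f}([x])\,d[x] \qquad \text{for all } h \in C_{c}(G/\Gamma), $$
and the Riesz representation theorem forces $\tilde{\mu}(d[x]) = \tilde{f}([x])\,d[x]$.

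The only delicate point is ensuring that we can apply (\ref{intform}) and rearrange sum and integral; but because $f \geq 0$, the series $\sum_{\gamma}f(x\gamma)$ is an unconditional sum of non-negative terms, so Tonelli justifies all manipulations, and the hypothesis that $\tilde{f} \in L^{1}(G/\Gamma)$ guarantees finiteness at the end. No genuine obstacle arises; the proposition is essentially a bookkeeping consequence of (\ref{intform}) once one writes down the correct test function.
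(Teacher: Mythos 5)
Your proof is correct and follows essentially the same route as the paper's: test against an arbitrary $h \in C_{c}(G/\Gamma)$, use the quotient integration formula (\ref{intform}) applied to $h_{\pi}f$, pull $h_{\pi}(x\gamma)=h([x])$ out of the sum, and identify the resulting density. The extra remarks on Tonelli and the $L^{1}$ hypothesis are sound justifications of steps the paper leaves implicit, but the argument is the same.
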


\begin{proof} Let $h \in C_{c}(G/\Gamma)$. Then by (\ref{intform}),
\bean \int_{G/\Gamma}h([x])\tilde{\mu}(d[x]) & = & \int_{G}h_{\pi}(x)\mu(dx)\\
& = & \int_{G}h_{\pi}(x)f(x)dx\\
& = & \int_{G/\Gamma}\sum_{\gamma \in \Gamma}h_{\pi}(x \gamma) f(x \gamma)d[x]\\
& = & \int_{G/\Gamma}h([x])\sum_{\gamma \in \Gamma} f(x \gamma)d[x],\eean
and the result follows. \end{proof}


If $\mu$ is a bounded Borel measure on $G$, its Fourier transform is the mapping $\widehat{\mu}: \G \rightarrow \G$ defined by $\widehat{\mu}(\chi) = \int_{G}\chi(g^{-1})\mu(dg)$ for all $\chi \in \G$.

The next result is quite well-known. We include it for completeness.
\begin{prop} \label{Fknown}
If $\sum_{\chi \in \Gamma^{\perp}}|\widehat{\mu}(\chi)| < \infty$, then $\tilde{\mu}$ has a continuous density.
\end{prop}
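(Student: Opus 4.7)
The plan is to produce an explicit candidate density via Fourier inversion on the compact group $G/\Gamma$, and then identify it with $\tilde{\mu}$ using uniqueness of Fourier transforms.

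First I would compute $\widehat{\tilde{\mu}}$ on $\widehat{G/\Gamma}$. Under the standing identification $\widehat{G/\Gamma} \simeq \Gamma^{\perp}$, a character $\chi \in \Gamma^{\perp}$ descends to a character on $G/\Gamma$ via $\chi([x]) := \chi(x)$. Then, using the defining property of the pushforward measure,
\begin{eqnarray*}
\widehat{\tilde{\mu}}(\chi) & = & \int_{G/\Gamma}\chi([x^{-1}])\tilde{\mu}(d[x]) \\
& = & \int_{G}\chi(\pi(x)^{-1})\mu(dx) \; = \; \int_{G}\chi(x^{-1})\mu(dx) \; = \; \widehat{\mu}(\chi),
\end{eqnarray*}
so the Fourier transform of $\tilde{\mu}$ as a measure on the compact group $G/\Gamma$ is simply the restriction of $\widehat{\mu}$ to $\Gamma^{\perp}$. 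By hypothesis this restriction is in $\ell^{1}(\Gamma^{\perp})$.

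Next, since $\Gamma^{\perp}$ is discrete (as $G/\Gamma$ is compact), the series
$$ g([x]) := \sum_{\chi \in \Gamma^{\perp}}\widehat{\mu}(\chi)\chi([x]) $$
converges absolutely and uniformly on $G/\Gamma$ by the Weierstrass $M$-test, so $g$ is a well-defined continuous function on $G/\Gamma$. I would then define the finite complex Borel measure $\nu$ on $G/\Gamma$ by $\nu(d[x]) := g([x])\,d[x]$ and compute $\widehat{\nu}$. Interchanging sum and integral (justified by the uniform convergence) and invoking the orthogonality relations for the characters of $G/\Gamma$ with respect to the induced Haar measure determined by (\ref{intform}), one obtains $\widehat{\nu}(\chi) = \widehat{\mu}(\chi) = \widehat{\tilde{\mu}}(\chi)$ for every $\chi \in \Gamma^{\perp}$. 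By the uniqueness theorem for the Fourier transform on bounded Borel measures on the locally compact abelian group $G/\Gamma$, we conclude $\nu = \tilde{\mu}$, so $g$ is a continuous density for $\tilde{\mu}$.

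The only point requiring real care is the bookkeeping of the Haar measure normalisations. The Haar measure on $G/\Gamma$ has been pinned down by (\ref{intform}), and the Haar measure on $\widehat{G/\Gamma} \simeq \Gamma^{\perp}$ (counting measure, possibly times a constant) is the dual measure in the Fourier inversion theorem on compact groups quoted just before the statement. Provided these are used consistently, the orthogonality computation produces exactly the constants needed for the identity $\widehat{\nu} = \widehat{\tilde{\mu}}$; this compatibility is essentially what makes the Poisson summation argument of Theorem \ref{PSF} work, and is the only nontrivial obstacle in the proof.
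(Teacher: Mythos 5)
Your argument is correct and is essentially the paper's own proof, elaborated: the paper likewise defines the candidate density $F([x]) = \sum_{\chi \in \Gamma^{\perp}}\chi(x)\widehat{\mu}(\chi)$, notes its continuity from the absolute summability, and concludes from $\widehat{F} = \widehat{\tilde{\mu}}$ by Fourier inversion/uniqueness. You have merely filled in the details the paper leaves implicit (the identity $\widehat{\tilde{\mu}}(\chi) = \widehat{\mu}(\chi)$ on $\Gamma^{\perp}$, the orthogonality computation, and the normalisation bookkeeping), which is fine.
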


\begin{proof} For each $x \in G$, define $F([x]) = \sum_{\chi \in \Gamma^{\perp}} \chi(x)\widehat{\mu}(\chi)$. Then  $F$ is well-defined and continuous, by the dominated convergence theorem. Since $\widehat{F} = \widehat{\tilde{\mu}}$ the result follows by Fourier inversion.
\end{proof}

Now let $(\mu_{t}, t \geq 0)$ be a convolution semigroup of probability measures on $G$. We assume that $\mu_{t}$ has a continuous density $f_{t}$ (with respect to Haar measure) for all $t > 0$.
In particular $\mu_{t}$ is infinitely divisible for all $t \geq 0$ and there exists a continuous negative definite function $\eta: \G \rightarrow \C$ so that for all $t \geq 0, \chi \in \G$,
\begin{equation} \label{LK}
\widehat{\mu_{t}}(\chi) = e^{- t \eta(\chi)}.
\end{equation}

In fact a great deal is known about the structure of $\eta$ -- it is described by an abstract version of the classical L\'{e}vy--Khintchine formula. We will not need this level of detail, which can be found in Chapter IV, section 7 of \cite{Par} (see also Theorem 8.3 of \cite{BF}).  It is well--known (and easily checked) that $(\widetilde{\mu_{t}}, t \geq 0)$ is a convolution semigroup of probability measures on $G/\Gamma$, where $\widetilde{\mu_{t}}: = \mu_{t} \circ \pi^{-1}$. We introduce the $C_{0}$--semigroup $(P_{t}, t \geq 0)$ of positivity preserving operators $P_{t}:L^{2}(G/ \Gamma) \rightarrow L^{2}(G/ \Gamma)$ (see Chapter 5 of \cite{App} for details) given by
\begin{equation} \label{sgroup}
P_{t}H([x]) = \int_{G/\Gamma}H([xy])F_{t}([y])d[y]
\end{equation}
for each $t \geq 0, H \in L^{2}(G/ \Gamma), x \in G$.

Now note that $\{\overline{\chi}, \chi \in \Gamma^{\perp}\}$ is a complete orthonormal basis for $L^{2}(G/ \Gamma)$. For each $t \geq 0, \chi \in \Gamma^{\perp}, x \in G$ we have
\bean P_{t}\overline{\chi}([x]) & = & \int_{G/\Gamma}\overline{\chi}([xy])F_{t}([y])d[y]\\
& = & \widehat{F_{t}}(\chi)\overline{\chi}([x])\\
& = & \widehat{f_{t}}(\chi)\overline{\chi}([x])\\
& = & e^{-t \eta(\chi)}\overline{\chi}([x]), \eean
by (\ref{LK}), noting that $\widehat{F_{t}}(\chi) = \widehat{f_{t}}(\chi)$ was established within the proof of Theorem \ref{PSF}. So we have deduced that $\{e^{-t \eta(\chi)}, \chi \in \Gamma^{\perp}\}$ is the spectrum of the operator $P_{t}$.

Then (\ref{PSF1}) yields

\begin{equation} \label{PTF1}
 F_{t}([x^{-1}] = \sum_{\chi \in \Gamma^{\perp}}e^{-t \eta(\chi)}\chi([x]),
 \end{equation}

\noindent and when we take $x = e$, we obtain

\begin{equation} \label{PTF2}
 F_{t}(e) = \tr(P_{t}).
 \end{equation}

(\ref{PTF2}) is our required {\it probabilistic trace formula}. The formula (\ref{PTF2}) holds more generally for densities of certain central convolution semigroups on non-abelian compact Lie groups -- see Theorem 5.3 of \cite{App3}. Note that it also holds for i-convolution semigroups, as we make no use of the measure $\mu_{0}$ in the above argument. Furthermore, if  (\ref{PTF2}) fails to hold, then (\ref{PSF2}) cannot be valid.

When we proved Theorem \ref{PSF}, we required both assumptions (A1) and (A2) to hold. In the case where $f_{t}$ is the continuous density of a measure $\mu_{t}$ from a convolution semigroup, we only require that (A1) holds. To see why this is true, suppose that only (A1) holds. 
Then $F_{t}$ is continuous on $G/\Gamma$, and hence square--integrable. It then follows from Theorem 5.4.4 of \cite{App} (see also section 3 of \cite{App2}) that $P_{t}$ is trace--class.\footnote{Although the results in \cite{App} and \cite{App2} were proved for compact Lie groups, the Lie structure is not needed for them to hold.} We give the essential details. We first rewrite (\ref{sgroup}) as
$$ P_{t}H([x]) = \int_{G/\Gamma}H([y])F_{t}([x^{-1}y])d[y].$$
Then $P_{t}$ is a bounded linear operator with a continuous kernel, which (since $G/\Gamma$ is compact) is also square-integrable. Thus $P_{t}$ is Hilbert--Schmidt for all $t > 0$. But now $P_{t} = P_{t/2}P_{t/2}$ is the product of two Hilbert--Schmidt operators, and so it is trace--class. Hence (A2) is a consequence of (A1).

We now obtain an immediate consequence of (\ref{PTF2}). If $(\mu_{t}, t \geq 0)$ is symmetric for all $t \geq 0$, then $F_{t}([x]) = F_{t}([x^{-1}])$ for all $t \geq 0, x \in G$. In this case $\eta(\chi) \geq 0$ for all $\chi \in \G$, and from (\ref{PTF1}) and (\ref{PTF2}), we easily deduce that for all $x \in G$;
$$ F_{t}([x]) \leq F_{t}(e).$$

To be precise, we have  \bean F_{t}([x]) & = & \sum_{\chi \in \Gamma^{\perp}}e^{-t \eta(\chi)}\chi([x])\\ & = & \sum_{\chi \in \Gamma^{\perp}}e^{-t \eta(\chi)}\overline{\chi([x])}, \eean and so
$$ F_{t}([x]) = \sum_{\chi \in \Gamma^{\perp}}e^{-t \eta(\chi)}\Re(\chi([x])),$$
and the inequality follows from the fact that $|\Re(\chi([x]))| \leq 1$ for all $\chi \in \Gamma^{\perp}$.

Suppose that $G/\Gamma$ is metrisable (which is always the case when it is second countable). Then for all $t > 0, \mu_{t}$ is recurrent, and so the potential measure of every open ball in $G/\Gamma$ is infinite (see Theorem 4.1 and Proposition 4.1 in \cite{App4}). Consequently the integral $\int_{0}^{\infty}F_{t}(\cdot)dt$, which would define the potential density in the transient case, is also infinite. This last fact can also be seen by formally integrating both sides of (\ref{PTF1}) and observing that $\Gamma^{\perp}$ contains the trivial character $\chi_{0}$ which maps each point in $G$ to $1$, but $\eta(\chi_{0}) = 0$.

On the other hand, let $\Gamma_{0}^{\perp}: = \Gamma^{\perp} \setminus \{\chi_{0}\}$. Then provided that the sum on the right hand side makes sense, there may be a possibility for giving meaning to the identity obtained by formally integrating (\ref{PTF2}) after $1$ has been subtracted from both sides, to obtain:

\begin{equation} \label{intr}
\int_{0}^{\infty}(F_{t}(e) - 1)dt = \sum_{\chi \in \Gamma^{\perp}_{0}}\frac{1}{\eta(\chi)}.
\end{equation}

In section 4, we will examine examples where this can be done, and also where it can't. The left hand side of (\ref{intr}) is a potential theoretic quantity. The right hand side may be thought of as the value at $z = 2$  of a generalised zeta-function $\sum_{\chi \in \Gamma^{\perp}_{0}}1/\eta(\chi)^{z/2}$ (see also the discussion at the top of p.225 in \cite{App2}).

Note that if $\mu_{t}$ is symmetric for all $t \geq 0$, i.e. $\mu_{t}(A) = \mu_{t}(A^{-1})$ for all $A \in \mathcal{B}(G)$, then $\widehat{\mu_{t}}(\chi) \in \R$ for all $\chi \in \widehat{G}$, and $\eta(\chi) > 0$ for all $\chi \in \Gamma^{\perp}_{0}$. In this case (\ref{intr}) holds if and only if $\sum_{\chi \in \Gamma^{\perp}_{0}}1/\eta(\chi)$ converges.


In all the examples that we consider in sections 5 and 6, it will always be the case that the density $f_{t}$, for $t > 0$, is not $\Gamma$-invariant. This is easily verified, either directly from the given form of $f_{t}$ or from inspection of the Fourier transform, using the fact that $f_{t}$ is $\Gamma$-invariant, if and only if $\widehat{f_{t}}(\chi) = \widehat{f_{t}}(\chi)\chi(\gamma)$ for all $\chi \in \widehat{\Gamma}, \gamma \in \Gamma$.

\section{The Connection with Induced Representations}

If $h \in L^{1}(G/\Gamma)$ and $\pi$ is a unitary representation of $G$ in some complex, separable Hilbert space $H$, we may define the {\it co-Fourier transform} of $h$ at $\pi$ to be the operator
$$ \pi(h) = \int_{G/\Gamma}h(x)\pi(x)dx,$$
which is well--defined as a Pettis integral in $H$.

As pointed out by Mackey in \cite{Mac}, if we take $H = L^{2}(G/\Gamma)$ and $\pi$ to be the left action of $G$ on $H$ defined by
$$ \pi(x)F([y]) = F([x^{-1}y]),$$ for all $x,y \in G, F \in H$,
then $\pi$ is the induced representation obtained from the trivial representation of $\Gamma$ on $\C$.

\begin{prop} For all $t > 0$, 
$$ \tr(\pi(F_{t})) = \tr(P_{t}).$$
\end{prop}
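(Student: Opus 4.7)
The plan is to diagonalise $\pi(F_t)$ in exactly the same orthonormal basis $\{\overline{\chi} : \chi \in \Gamma^{\perp}\}$ that was used to compute $\tr(P_t)$ in Section 3, and then observe that the sum of eigenvalues is reindexed by $\chi \mapsto \chi^{-1}$ into the sum already identified with $\tr(P_t)$.

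First I would check that $\pi(F_t)$ is well-defined and trace-class. Since $F_{t}([y])$ is a function on $G/\Gamma$ and, for $\gamma \in \Gamma$, $\pi(x\gamma)F([y]) = F([\gamma^{-1}x^{-1}y]) = F([x^{-1}y]) = \pi(x)F([y])$, the operator $\pi(x)$ factors through $G/\Gamma$, so the Pettis integral $\pi(F_t) = \int_{G/\Gamma} F_t([y])\pi([y])\,d[y]$ makes sense as a bounded operator on $L^{2}(G/\Gamma)$. Using $\pi([y_1])\pi([y_2]) = \pi([y_1y_2])$ and the convolution semigroup property $F_{t/2} * F_{t/2} = F_t$, a routine change of variables yields $\pi(F_{t/2})\pi(F_{t/2}) = \pi(F_t)$. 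Because $\pi(F_{t/2})$ has the continuous (hence $L^2$) kernel $K(x,z) = F_{t/2}([xz^{-1}])$ on the compact space $G/\Gamma$, it is Hilbert--Schmidt, and thus $\pi(F_t)$ is trace-class.

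Next I would compute eigenvalues. For each $\chi \in \Gamma^{\perp}$, using that $\chi$ is a character,
\begin{eqnarray*}
\pi(F_t)\overline{\chi}([x])
&=& \int_{G/\Gamma} F_t([y])\,\overline{\chi}([y^{-1}x])\,d[y] \\
&=& \overline{\chi}([x]) \int_{G/\Gamma} F_t([y])\,\chi([y])\,d[y] \\
&=& \overline{\chi}([x])\,\widehat{F_{t}}(\chi^{-1}) = e^{-t\eta(\chi^{-1})}\,\overline{\chi}([x]),
\end{eqnarray*}
where in the penultimate step I used that $F_t$ is real-valued together with the definition (\ref{FT}), and in the last step (\ref{LK}) combined with $\widehat{F_t}(\chi) = \widehat{f_t}(\chi)$ from the proof of Theorem \ref{PSF}. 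Thus $\{e^{-t\eta(\chi^{-1})} : \chi \in \Gamma^{\perp}\}$ is the spectrum of $\pi(F_t)$.

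Finally, since $\Gamma^{\perp}$ is a group, the map $\chi \mapsto \chi^{-1}$ is a bijection of $\Gamma^{\perp}$ onto itself, so
$$ \tr(\pi(F_t)) = \sum_{\chi \in \Gamma^{\perp}} e^{-t\eta(\chi^{-1})} = \sum_{\chi \in \Gamma^{\perp}} e^{-t\eta(\chi)} = \tr(P_t), $$
the last equality being the spectral identity for $P_t$ established in Section 3. The only delicate point in the argument is the bookkeeping between $\chi$ and $\chi^{-1}$ (equivalently, between $\eta$ and $\overline{\eta}$), which would fail for a \emph{general} measure $F_t$ not coming from a convolution semigroup on an LCA group, but is harmless here thanks to the group structure of $\Gamma^{\perp}$.
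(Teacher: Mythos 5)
Your proof is correct and follows essentially the same route as the paper: both diagonalise $\pi(F_{t})$ on the character basis of $L^{2}(G/\Gamma)$, identify the diagonal entries as Fourier coefficients $\widehat{F_{t}} = \widehat{f_{t}} = e^{-t\eta}$, and match the resulting sum with the spectral computation of $\tr(P_{t})$ from Section 3. The only differences are cosmetic --- you work with $\overline{\chi}$ rather than $\chi$ (hence the harmless reindexing $\chi \mapsto \chi^{-1}$, which in fact needs only the identity $\chi([y]) = \chi^{-1}([y^{-1}])$, not real-valuedness of $F_{t}$) and you add an explicit trace-class verification that the paper leaves implicit.
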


\begin{proof} By Fubini's theorem, \bean \tr(\widehat{\pi(F_{t})}) & = & \sum_{\chi \in \Gamma^{\perp}} \la \pi(F_{t}) \chi, \chi \ra \\
& = & \sum_{\chi \in \Gamma^{\perp}} \int_{G/\Gamma} \int_{G/\Gamma} F_{t}([x])\chi([x^{-1}y])\overline{\chi([y])}d[x] d[y]\\
& = & \sum_{\chi \in \Gamma^{\perp}} \int_{G/\Gamma} F_{t}([x])\chi([x^{-1}])d[x]\int_{G/K}|\chi([y])|^{2} d[y]\\
& = & \sum_{\chi \in \Gamma^{\perp}} \widehat{F_{t}}(\gamma).\\
& = & \sum_{\chi \in \Gamma^{\perp}} \widehat{f_{t}}(\gamma).\eean \end{proof}

\section{Gaussian and Stable Laws on the Torus}

\subsection {The Gaussian Case} In this section, for $d \in \N$, we take $G = \R^{d}$ and $\Gamma = \Z^{d}$ so that $G/\Gamma$ is the $d$-dimensional torus $\mathbb{T}^{d}$. Note that we also have $\Gamma^{\perp} = \Z^{d}$ in this case, and that for all $f \in L^{1}(\R^{d}), y \in \R^{d}, \widehat{f}(y) = \int_{\Rd}e^{-2\pi i x \cdot y}f(x)dx$. If $h \in {\mathcal S}(\Rd)$, the usual Schwartz space of rapidly decreasing functions, then both (A1) and (A2) are satisfied. The Poisson summation formula (\ref{PSF1}) then takes the familiar form:

\begin{equation} \label{PSFagain}
\sum_{n \in \Z^{d}}h(x + n) = \sum_{n \in \Z^{d}}\widehat{h}(n)e^{2\pi i n \cdot x},
\end{equation}
for each $x \in \R^{d}$. Also in this case, the negative definite function $\eta$ is given by the classical L\'{e}vy-Khintchine formula (see e.g. \cite{Sa}, Theorem 8.1); however, we cannot consider the validity of (\ref{PTF1}) unless the measure $\mu_{t}$ has a sufficiently regular density $f_{t}$  for all $t > 0$. One well--known example where this holds is when $f_{t}$ is Gaussian. In this case $f_{t} \in {\mathcal S}(\Rd)$ for all $t > 0$. We consider the case $d=1$ and $f_{t} \sim N(0, t/2\pi)$. Then for all $ t > 0, x \in \R, f_{t}(x) = \frac{1}{\sqrt {t}}e^{-\frac{\pi x^{2}}{t}}$. Its $\Z$--periodisation is given by $$F_{t}([x]) = \frac{1}{\sqrt{t}}\sum_{n \in \Z}\exp{\left\{- \frac{\pi (n + x)^{2}}{t}\right\}},$$ which is the fundamental solution of the heat equation on the torus,
$$ \frac{\partial F_{t}}{\partial t} = \frac{1}{4\pi}\frac{\partial^{2}F_{t}}{\partial x^{2}}.$$
In this case we have
$$ \theta(t) = \tr(P_{t}) = \sum_{n \in \Z}e^{-t \pi n^{2}},$$
where $\theta$ is the Jacobi theta function (see also \cite{Hey} pp.375--6 where the multivariate case is considered).

As is well--known, (\ref{PTF2}) (or equivalently, (\ref{PSF2})) yields the functional equation
$$ \theta(1/t) = \sqrt{t}\theta(t),$$
for all $t > 0$.

In this case the right hand side of (\ref{intr}) is $\frac{2}{\pi} \sum_{n=1}^{\infty}\frac{1}{n^{2}}$, and so (\ref{intr}) yields the elementary identity
$$ \int_{0}^{\infty}(\theta(t) - 1)dt = \frac{\pi}{3}.$$

\subsection{The Stable Case} Recall that a probability measure $\mu$ on $\Rd$ is stable if given any $a > 0$ there exists $b > 0$ and $c \in \Rd$ so that for all $y \in \Rd$,
$$ \widehat{\mu}(y)^{a} = \widehat{\mu}(by)e^{i c \cdot y}.$$
A measure $\mu$ is rotationally invariant if $\mu(OA) = \mu(A)$ for all $O \in O(d), A \in {\mathcal B}(\Rd)$. It can be shown (see e.g. \cite{Sa} Theorem 14.14, p. 86) that a convolution semigroup $\mu_{t}, t \geq 0)$ is stable and rotationally invariant if and only if there exists $0 < \alpha \leq 2$ and $\sigma > 0$ so that for all $t \geq 0$:

\begin{equation} \label{sta}
\widehat{\mu_{t}}(y) = \exp{\{-t \sigma |y|^{\alpha}\}}.
\end{equation}

The cases $\alpha = 2$ and $\alpha = 1$ are familiar Gaussian and (when $d=1$) Cauchy distributions, respectively. More generally $\mu_{t}$ always has a continuous density $f_{t}$ for $t > 0$, but there may not be a closed form for this. From now on, we assume that $0 < \alpha < 2$. The following useful estimate was obtained in \cite{PT} (see also the discussion in \cite{Sz}, p.1247):

\begin{equation} \label{stdens}
f_{t}(x) \leq \frac{Ct^{-d/\alpha}}{(1 + t^{-1/\alpha}|x|)^{1 + \alpha}},
\end{equation}
for all $t > 0, x \in \R^{d}$, and where $C > 0$ is independent of $x$ and $t$.

Let ${\mathcal M}_{\alpha}(\Rd)$ denote the space of functions of moderate decrease on $\Rd$, so that $g \in {\mathcal M}_{\alpha}(\Rd)$ if and only if there exists $K > 0$ so that for all $x \in \Rd$,
\begin{equation} \label{mod}
|g(x)| \leq \frac{K}{1 + |x|^{1 + \alpha}}.
\end{equation}

We see easily from (\ref{stdens}) that $f_{t} \in {\mathcal M}_{\alpha}(\Rd)$. Furthermore, the Fourier transform is defined on ${\mathcal M}_{\alpha}(\Rd)$, and Fourier inversion holds there (see e.g. \cite{SS}, p.144). In this case (A2) is satisfied, for as shown in equation (4.5) of \cite{App1},

$$ \sum_{n \in \Z^{d}}\widehat{\mu_{t}}(n) = \sum_{n \in \Z^{d}}e^{-\sigma t |n|^{\alpha}} < \infty,$$


\noindent so we can assert that $f_{t}$ projects to a continuous density on $\mathbb{T}^{d}$, by Proposition \ref{Fknown}. Then the argument given in section 3 (or see Theorem 5.4.4 of \cite{App}), ensures that the probabilistic trace formulae (\ref{PTF1}) and (\ref{PTF2}) are both valid.
Summability of the right hand side of (\ref{intr}) requires that $1 < \alpha < 2$. We then have
$$ \int_{0}^{\infty}(F_{t}(0) -1)dt  = 2\sum_{n=1}^{\infty}\frac{1}{n^{\alpha}}.$$
If $0 < \alpha < 1$, then (\ref{intr}) does not hold.

Investigating (A1) is far more tricky in this case. We have

\begin{prop} \label{sumst} For each $x \in \Rd, t > 0, \sum_{n \in \Z^{d}}f_{t}(x + n)$ converges.
\end{prop}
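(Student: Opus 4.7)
The plan is to dominate each term of the series using the pointwise estimate (\ref{stdens}) for the stable density and then reduce the resulting lattice sum to a convergent Lebesgue integral on $\Rd$ by a cube-decomposition argument. (I read (\ref{stdens}) with the sharp Blumenthal--Getoor exponent $d+\alpha$ in the denominator, which is what makes all the tails integrable in every dimension.) Fix $x \in \Rd$ and $t > 0$. Termwise application of (\ref{stdens}) gives an upper bound of the form
\begin{equation*}
\sum_{n \in \Z^{d}} f_{t}(x+n) \;\leq\; C_{t} \sum_{n \in \Z^{d}} \frac{1}{(1 + t^{-1/\alpha}|x+n|)^{d+\alpha}},
\end{equation*}
where $C_{t} > 0$ depends only on $t, d, \alpha$, so the problem is reduced to showing that this lattice sum is finite.

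The second step is to pass from the lattice sum to an integral. I would assign to each $n \in \Z^{d}$ the unit cube $Q_{n} := n + [-\tfrac{1}{2}, \tfrac{1}{2})^{d}$. For any $y \in Q_{n}$ the triangle inequality gives $|x+n| \leq |x+y| + \tfrac{1}{2}\sqrt{d}$, so that for all $n$ and all $y \in Q_{n}$,
\begin{equation*}
\frac{1}{(1 + t^{-1/\alpha}|x+n|)^{d+\alpha}} \;\leq\; \frac{A}{(1 + t^{-1/\alpha}|x+y|)^{d+\alpha}},
\end{equation*}
for a constant $A > 0$ depending only on $d, \alpha, t$. Integrating over $Q_{n}$ (which has unit Lebesgue measure), summing over $n$, and using $\Rd = \bigsqcup_{n} Q_{n}$, I obtain
\begin{equation*}
\sum_{n \in \Z^{d}} \frac{1}{(1 + t^{-1/\alpha}|x+n|)^{d+\alpha}} \;\leq\; A \int_{\Rd} \frac{dy}{(1 + t^{-1/\alpha}|x+y|)^{d+\alpha}}.
\end{equation*}

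The final step is the routine observation that, after the change of variable $z = t^{-1/\alpha}(x+y)$, the right-hand side becomes $A\, t^{d/\alpha}\int_{\Rd}(1+|z|)^{-(d+\alpha)}\,dz$. Passing to polar coordinates reduces this to a constant multiple of $\int_{0}^{\infty} r^{d-1}(1+r)^{-(d+\alpha)}\,dr$, which is finite because the integrand is continuous on $[0,\infty)$ and decays like $r^{-1-\alpha}$ as $r\to\infty$. The main (and only) obstacle worth naming is bookkeeping: one must verify that the constant $A$ in the cube-to-point comparison can be chosen uniformly in $n$. Once that is done, the proposition is essentially an immediate consequence of (\ref{stdens}).
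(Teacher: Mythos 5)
Your argument is correct, and it takes a genuinely different route from the paper's. The paper works directly with the lattice sum: using (\ref{stdens}) it reduces the claim to $\sum_{n \in \Z^{d}}(1+|x+n|^{\beta})^{-1} < \infty$ with $\beta = 1+\alpha$, proves the one--dimensional case by splitting the sum according to the sign of $x+n$ and comparing with $\sum_{m}(1+m^{\beta})^{-1}$, and then handles $d>1$ by a H\"{o}lder norm--comparison followed by the inequality $(1+a_{1})\cdots(1+a_{d}) \geq 1 + a_{1}\cdots a_{d}$ to factor the $d$--dimensional sum into a product of one--dimensional ones. You instead compare the lattice sum with the integral $\int_{\Rd}(1+t^{-1/\alpha}|x+y|)^{-(d+\alpha)}\,dy$ via the decomposition of $\Rd$ into unit cubes $Q_{n}$, which converges because the exponent exceeds $d$; the uniform constant $A$ is simply $(1+\tfrac{1}{2}\sqrt{d}\,t^{-1/\alpha})^{d+\alpha}$, so the bookkeeping you worry about is harmless. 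Two points of comparison are worth recording. First, your insistence on the sharp Pruitt--Taylor/Sztonyk exponent $d+\alpha$, rather than the $1+\alpha$ literally printed in (\ref{stdens}), is not cosmetic: with exponent $1+\alpha$ the lattice sum genuinely diverges whenever $d \geq 2$ and $1+\alpha \leq d$, since there are of order $R^{d-1}$ lattice points at distance of order $R$ from $-x$; so no argument can succeed from the printed bound alone, and your reading is the one that makes the proposition true in all dimensions. Second, the paper's own multidimensional step is the weaker of the two: the passage from $\bigl(1 + c\sum_{i}|x_{i}-n_{i}|^{\beta}\bigr)^{-1}$ to the product $\prod_{i}(1+c|x_{i}-n_{i}|^{\beta})^{-1}$ requires $1+c\sum_{i}a_{i} \geq \prod_{i}(1+ca_{i})$, which is the reverse of the elementary inequality invoked, so your integral--comparison argument, which avoids any factorisation, is the more robust one. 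The only slip in your write-up is trivial: you quote $|x+n| \leq |x+y| + \tfrac{1}{2}\sqrt{d}$, whereas the direction actually used is $|x+y| \leq |x+n| + \tfrac{1}{2}\sqrt{d}$; both follow from $|y-n| \leq \tfrac{1}{2}\sqrt{d}$ for $y \in Q_{n}$.
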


\begin{proof} By (\ref{stdens}) and (\ref{mod}) it is sufficient to show that $$\sum_{n \in \Z^{d}}\frac{1}{1 + (x + n)^{\beta}} < \infty,$$ where $\beta:= 1 + \alpha$. We first consider the case $d = 1$. If $x$ and $n$ are both of the same sign (say they are positive), then
$$ \sum_{n =1}^{\infty}\frac{1}{1 + (x + n)^{\beta}} \leq \sum_{n =1}^{\infty}\frac{1}{n^{\beta}} < \infty,$$
and convergence is even uniform (by the Weierstrass M-test).

If $x$ and $-n$ are of opposite sign (say $x > 0$ and $-n < 0$), then we write $x = \lfloor x \rfloor + \{x\}$, where $\lfloor x \rfloor$ is the integer part of $x$ (i.e. the largest integer that fails to exceed $x$) and
$\{x\} > 0$ is the fractional part of $x$. Then

\bean  \sum_{n =1}^{\infty}\frac{1}{1 + (x - n)^{\beta}} & = & \sum_{n \leq \lfloor x \rfloor}\frac{1}{1 + (x - n)^{\beta}} + \sum_{n > \lfloor x \rfloor}\frac{1}{1 + (x - n)^{\beta}}\\
& \leq & \sum_{n \leq \lfloor x \rfloor}\frac{1}{1 + (x - n)^{\beta}} + \sum_{n > \lfloor x \rfloor}\frac{1}{1 + (\lfloor x \rfloor - n)^{\beta}}\\
& = & \sum_{n \leq \lfloor x \rfloor}\frac{1}{1 + (x - n)^{\beta}} + \sum_{m=1}^{\infty} \frac{1}{1 + m^{\beta}} < \infty. \eean
It follows that for all $x \in \R$, $$\sum_{n \in \Z}\frac{1}{1 + (x + n)^{\beta}} < \infty.$$

When $d > 1$, we may use H\"{o}lder's inequality to deduce that for all $y \in \R^{d}$,

$$ |y|^{\beta} \geq d^{\frac{2 - \beta}{\beta}}\sum_{i=1}^{d}|y_{i}|^{\beta},$$ and so
\bean \sum_{n \in \Z^{d}}\frac{1}{1 + (x + n)^{\beta}} & \leq & \sum_{n \in \Z^{d}}\left(1 + d^{\frac{2 - \beta}{\beta}}\sum_{1=1}^{d}|x_{i} - n_{i}|^{\beta}\right)^{-1}\\
& \leq & \prod_{i=1}^{d}\sum_{n_{i}=1}^{\infty}\frac{1}{1 + d^{\frac{2 - \beta}{\beta}}|x_{i} - n_{i}|^{\beta}} < \infty, \eean
by the one-dimensional result. To obtain the last line in the display we used the elementary inequality
$$ (1 + a_{1})(1 + a_{2}) \cdots (1 + a_{d}) \geq 1 + a_{1}a_{2}\cdots a_{d},$$
for $a_{i} \geq 0, i = 1, 2, \ldots, d$.

\end{proof}

We are unable to verify that the convergence is uniform in Proposition \ref{sumst}, nor that the series converges to an integrable function. So we cannot apply Proposition \ref{dchoice}.
In this case we cannot assert the validity of the Poisson summation formula.
If $d=1$ and $(\mu_{t}, t \geq 0)$ is the Cauchy semigroup, we may verify directly that the Poisson summation formula (\ref{PSF2}) fails. For convenience we take $t = 1$, and write $f:=f_{t}$. Then for all $x \in \R, f(x)= \frac{1}{\pi(1 + x^{2})}$ and $\widehat{f}(x) = e^{-|x|}$. We have
$$ \sum_{n \in \Z}f(n) = \frac{1}{\pi}\left(1 + 2\sum_{n=1}^{\infty}\frac{1}{1 + n^{2}}\right) \leq \frac{1}{\pi}\left(1 + \frac{\pi^{2}}{3}\right) = 1.3655\ldots.$$
However
$$ \sum_{n \in \Z}\widehat{f}(n) = 1 + 2\sum_{n=1}^{\infty}e^{-n} = 1 + \frac{2}{e-1} = 2.1639\ldots.$$



\section{Semistable Semigroups on the $p$-Adics and the Ad\`{e}les}

In this section we will work extensively with $p$--adic analysis and its generalisations to the ad\`{e}le group. Good references for the former are \cite{AKS}, \cite{BrFr} and \cite{Kob}, and for the latter \cite{BrFr} again, \cite{La} and \cite{RV}.

Let ${\mathcal I}$ be a set and $\{(G_{\alpha}, H_{\alpha}), \alpha \in {\mathcal I}\}$ be a collection of pairs wherein $G_{\alpha}$ is a locally compact abelian group and $H_{\alpha}$ is an open subgroup of $G_{\alpha}$. The corresponding {\it restricted direct product} $G$ is the collection of all sets $\{g_{\alpha}, \alpha \in {\mathcal I}\}$ where $g_{\alpha} \in G_{\alpha}$ with $g_{\alpha} \in H_{\alpha}$ for all but finitely many $\alpha \in {\mathcal I}$. $G$ is itself a locally compact abelian group, with the group law defined component--wise. The open sets in $G$ are of the form $\Pi_{\alpha \in {\mathcal I}}U_{\alpha}$, where $U_{\alpha}$ is open in $G_{\alpha}$, with $U_{\alpha} = H_{\alpha}$ for all but finitely many values of $\alpha$.

Let ${\mathcal P}$ denote the set of all prime numbers and ${\mathcal P}^{*}: = \{\infty\} \cup {\mathcal P}$. Recall that for each $p \in {\mathcal P}$, the group $\Q_{p}$ of $p$-adic rationals is the completion of $\Q$ with respect to the metric $d(x,y) = |x - y|_{p}$, where $x, y \in \Q$, and the $p$-adic norm
$ |x|_{p} = p^{-m}$, where $m$ is the unique integer for which $x = \frac{a}{b}p^{m}$, with the integers $a, b$ and $p$ being relatively prime. Define $\Q_{\infty}: = \R$ with its usual metric. For each $p \in {\mathcal P}$, let $\Z_{p}:= \{x \in \Q_{p}: |x|_{p} \leq 1\}$ be the open subgroup of $p$-adic integers. Note that it is also compact. We find it convenient to define $\Z_{\infty} = \Z$. Now take ${\mathcal I}$ to be ${\mathcal P}^{*}$. We obtain the {\it ad\`{e}le group}, to be denoted ${\mathbb A}$, by taking the restricted direct product with $G_{p} = \Q_{p}$ and $H_{p} = \Z_{p}$, for all $p \in {\mathcal P}^{*}$.

The rational numbers $\Q$ are a discrete subgroup of ${\mathbb A}$ with the diagonal embedding $q \rightarrow \{q_{p}, p \in {\mathcal P}^{*}\}$ where $q_{p} = q$ for all $p$. To see this observe that if $q = c/d$ then $d$ is divisible by only finitely many $p \in {\mathcal P}$, and so for all but finitely many $p$, either $p$ divides $c$ but not $d$ (and so $q \in \Z_{p}$), or $p$ divides neither $c$ nor $d$ in which case $|q|_{p} = 1$. It can be shown (see e.g. Chapter 7, Theorem 2, p.139 of \cite{La}) that ${\mathbb A}/\Q$ is compact, and so we are in the context of section 1.

Before proceeding further, let us form another restricted direct product over ${\mathcal P}^{*}$. This time, for $p \in {\mathcal P}$, we take $G_{p} = \Q_{p}^{X} = \Q_{p} \setminus \{0\}$, which is the multiplicative group of non-zero $p$-adic numbers, and $H_{p} = U_{p}:=\{x \in \Q_{p}; |x|_{p} = 1\}$, the group of multiplicative units. We take $G_{\infty} = \R \setminus \{0\}$, and $H_{\infty} = \{-1, 1\}$. In this case, our restricted direct product is the {\it id\`{e}le group}, to be denoted ${\mathbb J}$.  The group ${\mathbb J}$ acts on ${\mathbb A}$ by automorphisms, to be precise if $a \in {\mathbb J}$ and $x \in {\mathbb A}$, then $xa = \{x_{p}a_{p}; p \in {\mathcal P}^{*}\}$. We also need the id\`{e}lic norm of $a \in {\mathbb J}$ which is $||a|| = \Pi_{p \in {\mathcal P}^{*}}|a|_{p}$. Note that the product is finite as $|a|_{p} = 1$ for all but finitely many $p$. We will need the fact that if $dx$ denotes Haar measure on $\A$, then $d(ax) = ||a||dx$, for all $a \in \J$.

Recall that every $x \in \Q_{p}$ may be written uniquely as $x = x_{i} + x_{f}$, where $x_{i} \in Z_{p}$, and either $x_{f} = 0$ or $|x_{f}|_{p} > 1$. We call $x_{f}$, the fractional part of $x$, and write $[x] = x_{f}$. We have $\widehat{\Q_{p}} = \Q_{p}$, and  every character of $\Q_{p}$ is of the form $\chi_{y}$, where $y \in \Q_{p}$ and $\chi_{y}(x) = e^{2 \pi i [xy]}$ for each $x \in \Q_{p}$. Similarly, we have $\widehat{\A} = \A$, and the characters of $\A$ are all of the form $\chi_{y}$, where $y \in \A$ and for all $x \in \A$,

\begin{equation} \label{charad}
\chi_{y}(x) = \Pi_{p \in {\mathcal P}^{*}}\chi_{y_{p}}(x_{p}).
\end{equation}
Again we note that $\chi_{y_{p}}(x_{p}) = 1$ for all but finitely many $p$.

If $f: \A \rightarrow \C$ and $a \in \J$, we define $f_{a}:\A \rightarrow \C$ by $f_{a}(x): = ||a||f(ax)$, for all $x \in \A$.

\begin{prop} \label{trans} If $f$ is a probability density function defined on $\A$ and $a \in \J$, then $f_{a}$ is a probability density function on $\A$.
\end{prop}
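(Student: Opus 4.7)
The plan is to verify the two defining properties of a probability density function on $\mathbb{A}$: nonnegativity and integration to $1$. Nonnegativity of $f_a$ is immediate, since $\|a\| > 0$ for every id\`ele $a \in \mathbb{J}$ (the id\`elic norm is a product of strictly positive local norms, all but finitely many equal to $1$) and $f \geq 0$ by assumption.

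The only substantive step is the normalisation. Here I would invoke the transformation rule for Haar measure on $\mathbb{A}$ under the id\`elic action stated just before the proposition, namely $d(ax) = \|a\|\,dx$ for $a \in \mathbb{J}$. Applying the change of variables $y = ax$ to
\begin{equation*}
\int_{\mathbb{A}} f_a(x)\,dx = \int_{\mathbb{A}} \|a\|\,f(ax)\,dx,
\end{equation*}
we rewrite $\|a\|\,dx = d(ax) = dy$, obtaining $\int_{\mathbb{A}} f(y)\,dy = 1$, where the last equality holds because $f$ is a pdf on $\mathbb{A}$.

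There is essentially no obstacle here; the content of the proposition is already encoded in the stated scaling property of Haar measure. The only point that warrants a sentence of comment is measurability: $f_a$ is measurable because $x \mapsto ax$ is a continuous (in fact topological) automorphism of $\mathbb{A}$ and multiplication by the positive constant $\|a\|$ preserves measurability. So the proof will amount to one display, with a brief justification of the substitution via the Haar-measure scaling, and a remark that nonnegativity is inherited trivially from $f$.
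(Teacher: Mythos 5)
Your proof is correct and follows exactly the paper's argument: the single change of variables $y = ax$ justified by the scaling identity $d(ax) = \|a\|\,dx$ stated just before the proposition. The additional remarks on nonnegativity and measurability are fine but not part of the paper's (two-line) proof.
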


\begin{proof} \bean \int_{\A}f_{a}(x)dx & = & ||a||\int_{\A}f(ax)dx \\
& = & \int_{A}f(ax)d(ax) = \int_{\A}f(x)dx = 1. \eean \end{proof}

If (A1) and (A2) hold, then we have the Poisson summation formula from section 1. In fact the following form is often called the {\it Riemann-Roch theorem} in this context, as the well--known formula of that name in algebraic geometry may be derived from it (see section 7.2. of \cite{RV}).

\begin{theorem} \label{RR} If $f$ is continuous, $a \in \J$ and (A1) and (A2) hold for $f_{a}$, then for all $x \in \A$,

$$ \sum_{r \in \Q}f(a(r + x)) = \frac{1}{||a||}\sum_{r \in \Q}\widehat{f}(a^{-1}r)\chi_{r}(x),$$
and when $x = 0$,

$$ \sum_{r \in \Q}f(ar) = \frac{1}{||a||}\sum_{r \in \Q}\widehat{f}(a^{-1}r).$$

\end{theorem}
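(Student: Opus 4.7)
\emph{Proof proposal.} The plan is to derive Theorem~\ref{RR} as a direct application of the abstract Poisson summation formula (Theorem~\ref{PSF}) to the function $h := f_{a}$, in the setting $G = \A$ with $\Gamma = \Q$ (diagonally embedded). The hypotheses of Theorem~\ref{PSF} are met by construction: $\A/\Q$ is compact as recalled earlier in this section, $f_{a}$ is continuous (multiplication by $a \in \J$ is a homeomorphism of $\A$) and integrable (Proposition~\ref{trans}), and (A1), (A2) are assumed for $f_{a}$.

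Two standard facts of adelic harmonic analysis are needed before applying (\ref{PSF1}). First, under the self-duality $\widehat{\A} \simeq \A$ via $y \mapsto \chi_{y}$, one has $\Q^{\perp} = \Q$ (diagonally embedded); this rests on the product formula for the $p$-adic norms and is covered in the references \cite{La}, \cite{RV} cited at the start of the section. Second, for $r \in \Q$ and $a \in \J$, the multiplicativity of each local character yields
$$ \chi_{r}(a^{-1} y) \;=\; \prod_{p \in {\mathcal P}^{*}} e^{2 \pi i \, [r\, a_{p}^{-1} y_{p}]_{p}} \;=\; \chi_{a^{-1} r}(y), \qquad y \in \A, $$
and hence also $\chi_{r}(-a^{-1}y) = \chi_{a^{-1}r}(-y)$.

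Granted these, the computation is short. The left-hand side of (\ref{PSF1}) for $f_{a}$ unfolds directly from the definition $f_{a}(z) = ||a|| f(az)$ as
$$ \sum_{r \in \Q} f_{a}(x + r) \;=\; ||a|| \sum_{r \in \Q} f(a(r + x)). $$
For the right-hand side, the change of variable $y = ax$ (whose Jacobian $d(ax) = ||a|| \, dx$ was recalled at the start of this section) gives
$$ \widehat{f_{a}}(\chi_{r}) \;=\; ||a|| \int_{\A} f(ax)\,\chi_{r}(-x)\,dx \;=\; \int_{\A} f(y)\,\chi_{a^{-1} r}(-y)\,dy \;=\; \widehat{f}(a^{-1} r), $$
where the second equality absorbs one factor of $||a||^{-1}$ from the Jacobian and invokes the character identity above. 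Substituting both sides into (\ref{PSF1}) and cancelling a factor of $||a||$ delivers the first asserted identity; evaluating at $x = 0$ and using $\chi_{r}(0) = 1$ yields the second.

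The only non-routine ingredients are the two preliminary facts, namely $\Q^{\perp} = \Q$ and the compatibility of the character pairing with idele multiplication. These are precisely the arithmetic inputs that make Theorem~\ref{RR} a genuinely adelic refinement of Theorem~\ref{PSF} rather than a formal corollary, and will be the main conceptual point one has to be careful about; but both are standard in the cited literature, so in the actual proof I would simply quote them and let the remainder be essentially book-keeping.
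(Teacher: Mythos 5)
Your proposal is correct and follows essentially the same route as the paper: apply Theorem \ref{PSF} to $h = f_{a}$ and reduce the right-hand side via the change of variables $y = ax$ with Jacobian $\|a\|$, yielding $\widehat{f_{a}}(\chi_{r}) = \widehat{f}(a^{-1}r)$, exactly the identity the paper computes. The only difference is that you make explicit the self-duality fact $\Q^{\perp} = \Q$ and the character compatibility $\chi_{r}(a^{-1}y) = \chi_{a^{-1}r}(y)$, which the paper leaves implicit.
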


\begin{proof} This follows from Theorem \ref{PSF}, and then fact that for $y \in \A$,
\bean \frac{1}{||a||}\widehat{f_{a}}(y) & = & \int_{\A}f(ax)\overline{\chi_{y}(x)}dx\\
& = & \int_{\A}f(ax) e^{-2\pi i [xy]}dx\\
& = & \int_{\A}f(x)e^{-2\pi i [xa^{-1}y]}da^{-1}x\\
& = & \frac{1}{||a||}\widehat{f}(a^{-1}y). \eean \end{proof}

To connect with probability theory, our goal (which will not be realised in this paper) should be to

\begin{enumerate}

\item Find some interesting examples of infinitely divisible probability measures and/or convolution semigroups that are defined on the ad\`{e}le group, which have densities, and which satisfy (A1) and (A2).

\item To make contact with the ``Riemann-Roch theorem'', we would want the above to continue to hold when we transform the pdf by an id\`{e}le, as in Proposition \ref{trans}.

\end{enumerate}

To address (1), we first work on the $p$-adic group $\Q_{p}$, for some prime $p$. Following \cite{AKS}, p.24, we say that a mapping $f: O \rightarrow \C$, where $O$ is an open set in $\Q_{p}$, is {\it locally constant} if given any $x \in O$ there exists $l(x) \in \Z$ so that $f(x + y) = f(x)$ for all $y \in \Q_{p}$ for which $|x - y|_{p} \leq p^{l(x)}$ (so that $y$ is in the closed ball of radius $p^{l(x)}$ about $x$). Clearly every locally constant function is continuous. The {\it Bruhat-Schwartz space} of test functions ${\mathcal D}(\Q_{p})$ is the set of all locally constant functions that have compact support. A nice example of an element of ${\mathcal D}(\Q_{p})$ is the function $\gamma _{p} = {\bf 1}_{\Z_{p}}$, the indicator function of the $p$-adic integers. From an analytic perspective, it is a good candidate to be a ``$p$-adic Gaussian'' as it is equal to its own Fourier transform. It is also a pdf, if we scale Haar measure on $\Q_{p}$ in the usual way, so that $\Z_{p}$ has total mass one (and this will be the case from now on). It is certainly infinitely divisible, but also idempotent, and so can be embedded in an i-convolution semigroup of probability measures in which every measure has density $\gamma _{p}$. It is worth pointing out that since $\Q_{p}$ is totally disconnected, an infinitely divisible measure on $\Q_{p}$ can only have a L\'{e}vy-Khintchine formula of pure jump type, as shown in Remark 1 (after Corollary 7.1) of \cite{Par} p.109, and also Proposition 1 of \cite{Ev}; in particular, there are no Gaussians in the probabilistic sense outside the idempotent class.

We now describe the corresponding space of Bruhat-Schwartz test functions on $\A$. This is the set ${\mathcal D}(\A)$ of all mappings $\phi: \A \rightarrow \C$ for which for all $x \in \A$, with $x = (x_{\infty}, x_{2}, x_{3}, x_{5}, \ldots)$:
\begin{equation} \label{bs}
 \phi(x) = \phi_{\infty}(x_{\infty})\Pi_{p \in {\mathcal P}}\phi_{p}(x_{p}),
\end{equation}
where
\begin{enumerate}
\item  $\phi_{\infty}: \R \rightarrow \C$  with $\max_{n \in \Z_{+}}\sup_{x \in \R}|x|^{n}|\phi_{\infty}(x_{\infty})| < \infty,$
\item $\phi_{p} \in {\mathcal D}(\Q_{p})$ for all $p \in {\mathcal P}$,
\item $\phi_{p} = \gamma_{p}$ for all but finitely many $p \in {\mathcal P}$.
\end{enumerate}

Each $\phi \in {\mathcal D}(\A)$ is continuous. It is shown in \cite{RV} p.261, that Theorem \ref{RR} holds when $f \in {\mathcal D}(\A)$. Since such an $f$ is also integrable, we then have a large supply of probability density functions for which Theorem \ref{RR} holds. But apart from the trivial Gaussian case discussed above, there is no obvious connection to more sophisticated probabilistic structure.

We now turn our attention to convolution semigroups on the $p$-adics. At the general level, we remark that there has been work on the solution of the embedding problem of realising an infinitely divisible measure as an element of a convolution semigroup, see \cite{Shah1, Shah2}. We are interested in concrete examples, and as pointed out in the introduction, there has been a substantial amount of work on constructing rotationally invariant semistable processes, mainly from the point of view of solving Kolmogorov's equation to find the transition probabilities (see e.g. \cite{AK2, Yas1}). On the other hand, the emphasis in \cite{HS} pp. 504-6 is on Fourier analysis (see also \cite{Yas2}). In particular in \cite{HS} p.506, it is shown that for any $C, \gamma > 0$, there exists a rotationally invariant, strictly-operator semistable convolution semigroup $(\rho_{t}^{(p)}, t \geq 0)$ of probability measures on $\Q_{p}$ such that, for all $t \geq 0, y \in \Q_{p}$,

\begin{equation} \label{semistable}
\widehat{\rho_{t}^{(p)}}(\chi_{y}) = \exp{\{-Ct|y|_{p}^{\gamma}\}}
\end{equation}

\noindent (see \cite{Yas1,Yas2,DMFT} for related work).

In this context a probability measure $\lambda$ on $\Q_{p}$ is rotationally invariant if $\lambda(uA) = \lambda(A)$ for all $u \in U_{p}, A \in {\mathcal B}(\Q_{p})$ and a convolution semigroup $(\mu_{t}, t \geq 0)$ is strictly-operator semistable if there exists a continuous homomorphism $\delta$ of $\Q_{p}$ and $\beta \in (0, 1)$ so that for all $t \geq 0, \delta\mu_{t} = \mu_{\beta t}$. In the case we are considering we have $\delta x = px$ for all $x \in \Q_{p}$, and $\beta = p^{-\gamma}$. We assume from now on that $(\mu_{t}, t \geq 0)$ is non-trivial, in that for all $t >0, \mu_{t} \neq \delta_{0}$.

We will first show that the measures $\rho_{t}$ have a continuous density for all $t > 0$. We show that the Fourier transform is integrable, and for ease of notation we take $t = 1/C$:

\begin{theorem} \label{maxstab} For all $\gamma > 0$, \begin{enumerate}

\item $\int_{\Z_{p}}e^{-|y|_{p}^{\gamma}}dy = \ds\frac{p-1}{p}\sum_{n=0}^{\infty}\frac{e^{-p^{-n\gamma}}}{p^{n}}.$
\item $\int_{\Q_{p}}e^{-|y|_{p}^{\gamma}}dy = (p-1)\sum_{n \in \Z}\psi(p,n)p^{n}e^{-p^{n\gamma}},$

where
$\psi(p,n): = \left\{\begin{array}{c c} 1/p & \mbox{if}~n < 0\\
e^{-p\gamma}& \mbox{if}~n > 0\\
1/p + e^{-p\gamma}& \mbox{if}~n = 0\end{array} \right. $

\end{enumerate}

\end{theorem}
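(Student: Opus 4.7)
The plan is to evaluate both integrals by the standard $p$-adic device of decomposing the domain into the level sets of $|\cdot|_p$, on which the integrand $e^{-|y|_p^\gamma}$ is constant. The only prerequisite is the Haar measure of a $p$-adic sphere: writing $S_n := \{y \in \Q_p : |y|_p = p^n\} = p^{-n}\Z_p \setminus p^{-n+1}\Z_p$, the normalisation $\mu(\Z_p)=1$ together with the scaling $\mu(a\Z_p)=|a|_p$ for $a \in \Q_p^X$ yield $\mu(S_n) = p^n - p^{n-1} = \frac{p-1}{p}\,p^n$, and the integrand is the constant $e^{-p^{n\gamma}}$ on $S_n$.

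Part (1) then follows immediately. Since $\Z_p \setminus \{0\}$ is the disjoint union of $S_{-n}$ for $n \geq 0$, the integrand equals $e^{-p^{-n\gamma}}$ on $S_{-n}$, and
\[\int_{\Z_p} e^{-|y|_p^\gamma}\,dy = \sum_{n=0}^{\infty}e^{-p^{-n\gamma}}\,\mu(S_{-n}) = \frac{p-1}{p}\sum_{n=0}^{\infty}\frac{e^{-p^{-n\gamma}}}{p^n},\]
with convergence by comparison against the geometric series $\sum p^{-n}$.

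For part (2), the same decomposition, now ranging over all $n \in \Z$, gives formally
\[\int_{\Q_p} e^{-|y|_p^\gamma}\,dy = \frac{p-1}{p}\sum_{n \in \Z}p^n e^{-p^{n\gamma}}.\]
Before asserting this equality I would justify absolute convergence by estimating the two tails separately: as $n\to-\infty$ the exponential factor tends to one and the terms behave like the geometric $p^n$, while as $n\to+\infty$ the double-exponential decay of $e^{-p^{n\gamma}}$ overwhelms the geometric growth of $p^n$. To recast this symmetric series in the asymmetric form $(p-1)\sum_{n\in\Z}\psi(p,n)p^n e^{-p^{n\gamma}}$ of the statement, I would split the index set at $n = 0$, use part (1) to identify the $n \le 0$ block with $\int_{\Z_p}$, compute the $n \ge 1$ block directly from integration over $\Q_p\setminus\Z_p$, and reassemble; the additive structure $1/p+e^{-p\gamma}$ in $\psi(p,0)$ reflects the fact that the contribution of the unit sphere $S_0$ is carried by both blocks in the chosen partition.

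The main obstacle is this final bookkeeping rather than any analytic subtlety: the direct sphere computation produces a uniform coefficient $(p-1)/p$, so matching the exact partition-dependent weights $\psi(p,n)$---and in particular explaining the constant factor $e^{-p\gamma}$ attached to every $n > 0$---requires identifying the precise grouping intended by the author.
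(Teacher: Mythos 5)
Your method is in substance the same as the paper's: both proofs reduce the integrals to sums over the shells $\{|y|_{p} = p^{n}\}$, on which the integrand is constant; the paper merely peels these shells off one digit at a time via coset decompositions instead of quoting the shell measure $\frac{p-1}{p}p^{n}$ outright. Your part (1) is correct and agrees with the paper's.

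For part (2), the ``obstacle'' you identify at the end is not a bookkeeping problem: your value $\frac{p-1}{p}\sum_{n \in \Z}p^{n}e^{-p^{n\gamma}}$ is the correct one, and it cannot be rearranged into the stated form with the weights $\psi(p,n)$, because formula (2) as printed is erroneous. The paper's own iteration first produces the correct partial sums $\int_{\{|y|_{p} \leq p^{n}\}}e^{-|y|_{p}^{\gamma}}dy = (p-1)\sum_{m=1}^{n}p^{m-1}e^{-p^{m\gamma}} + c_{p,\gamma}$, in agreement with your shell computation, but then rewrites $\sum_{m=1}^{n}p^{m-1}e^{-p^{m\gamma}}$ as $e^{-p^{\gamma}}\sum_{r=0}^{n-1}p^{r}e^{-p^{r\gamma}}$, which tacitly uses $e^{-p^{(r+1)\gamma}} = e^{-p^{\gamma}}e^{-p^{r\gamma}}$, i.e. $p^{(r+1)\gamma} = p^{r\gamma} + p^{\gamma}$; this is false. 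The weights $\psi(p,n)$, with the constant factor $e^{-p\gamma}$ (itself presumably a typo for $e^{-p^{\gamma}}$) attached to all $n \geq 0$, are exactly what that false identity produces. The two expressions agree termwise for $n \leq -1$ and disagree for $n \geq 0$; a numerical check at $p=2$, $\gamma=1$ gives approximately $0.72$ for your formula and $0.64$ for the printed one. So you should not try to force your answer into the $\psi$ form: the correct statement of (2) is $\int_{\Q_{p}}e^{-|y|_{p}^{\gamma}}dy = \frac{p-1}{p}\sum_{n \in \Z}p^{n}e^{-p^{n\gamma}}$, i.e. $\psi(p,n) = 1/p$ for every $n$. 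Your two-tail convergence argument is fine, and the discrepancy is harmless for the paper's subsequent use of the theorem, which only requires finiteness of the integral.
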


\begin{proof} We follow the method of calculation of integrals given in \cite{BrFr} pp. 12-14.

\begin{enumerate}

\item Write $\Z_{p} = \bigcup_{k = 0}^{p-1}C_{k}$, where $C_{k}:= \{x \in \Z_{p}; x = k + \sum_{i=1}^{\infty}a_{i}p^{i}, a_{i} = 0, \ldots, p-1\}$. Since, by translation invariance, the Haar measure of $C_{k}$ takes the same value for all $k = 0, \ldots, p-1$, we deduce that
    $$\int_{\Z_{p}}e^{-|y|_{p}^{\gamma}}dy = \ds\frac{p-1}{p}e^{-1} + \int_{C_{0}}e^{-|y|_{p}^{\gamma}}dy.$$
   We now write $C_{0} = \bigcup_{l = 0}^{p-1}C_{0,l}$, where $C_{0,l}:= \{x \in \Z_{p}; x = lp + \sum_{i=1}^{\infty}a_{i}p^{i+1}, a_{i} = 0, \ldots, p-1\}$. By a similar argument to the above, we find that
    $$\int_{C_{0}}e^{-|y|_{p}^{\gamma}}dy = \frac{p-1}{p^{2}}e^{-p^{-\gamma}} + \int_{C_{0,0}}e^{-|y|_{p}^{\gamma}}dy.$$
    Iterating this argument leads to
    $$\int_{\Z_{p}}e^{-|y|_{p}^{\gamma}}dy = \frac{p-1}{p}e^{-1} + \frac{p-1}{p^{2}}e^{-p^{-\gamma}} + \frac{p-1}{p^{3}}e^{-p^{-2\gamma}} + \cdots,$$
    and the result follows easily.

    \item Write $c_{p, \gamma}: = \int_{\Z_{p}}e^{-|y|_{p}^{\gamma}}dy$ to ease the burden of notation. Let $K^{(-1)}: = \{x \in \Q_{p}, |x|_{p} \leq p\} = \bigcup_{k = 0}^{p-1}K^{(-1)}_{m}$, where $K^{(-1)}_{m}:= \{x \in K^{(-1)}; x = mp^{-1} + y, y \in \Z_{p}\}$. By translation invariance, we have for all $m = 1, \ldots, p-1, \int_{K^{(-1)}_{m}}e^{-|y|_{p}^{\gamma}}dy = e^{-p^{\gamma}}$ and so
        $$ \int_{K^{(-1)}}e^{-|y|_{p}^{\gamma}}dy = (p-1)e^{-p^{\gamma}} + c_{p, \gamma}.$$
        Now for all $\nN$ define $K^{(-n)}: = \{x \in \Q_{p}, |x|_{p} \leq p^{n}\}$, then by a similar argument to that given above we obtain
        \bean \int_{K^{(-n)}}e^{-|y|_{p}^{\gamma}}dy & = & (p-1)[p^{n-1}e^{-p^{n \gamma}} + p^{n-2}e^{-p^{(n-1) \gamma}} + \cdots + e^{-p^{\gamma}}] + c_{p, \gamma} \\
        & = & (p-1)e^{-p^{\gamma}}\sum_{r=0}^{n-1}p^{r}e^{-p^{r\gamma}} + c_{p, \gamma}\eean
        But by the monotone convergence theorem
        $$ \int_{\Q_{p}}e^{-|y|_{p}^{\gamma}}dy = \lim_{n \rightarrow \infty}\int_{K^{(-n)}}e^{-|y|_{p}^{\gamma}}dy = (p-1)e^{-p^{\gamma}}\sum_{n=0}^{\infty}p^{n}e^{-p^{n\gamma}} + c_{p, \gamma}. $$

\end{enumerate}
\end{proof}

Note that both series obtained in Theorem \ref{maxstab} converge; the convergence of the first is obvious, and that of the second follows from using Cauchy's root test. In fact we could also have deduced the finiteness of the integrals in Theorem \ref{maxstab} from the more general result of Proposition 3.1 in \cite{Yas2}. The proof therein uses a more probabilistic approach, but does not compute the integrals explicitly.



It is now easy to see that $\mu_{t}$ has a continuous density for all $t > 0$, by essentially the same method as used in Proposition \ref{Fknown}. Define for all $x \in \Q_{p}$,
\begin{equation} \label{dens}
 f_{t}^{(p)}(x) = \int_{\Q_{p}}\chi_{y}(x)e^{-Ct|y|_{p}^{\gamma}}dy.
\end{equation}
Then $f_{t}$ is continuous by standard use of dominated convergence. But $f_{t}$ is the density of $\mu_{t}$ by uniqueness of Fourier transforms.

We will obtain a series expansion for the density $f_{t}$.  To carry this out we will need the Gel'fand--Graev gamma function (see \cite{BrFr} and \cite{GGP} pp.144-51) $\Gamma_{p}: \C \rightarrow \C$ defined by
\begin{equation} \label{gamma1}
\Gamma_{p}(s) = \int_{\Q_{p}}\chi_{1}(x)|x|_{p}^{s-1}dx,
\end{equation}
where $s \in \C$. It is shown in \cite{BrFr} that

\begin{equation} \label{gamma2}
\Gamma_{p}(s) =\ds\frac{1 - p^{s-1}}{1 - p^{-s}}.
\end{equation}

In the sequel, we will consider the case $s \in \R$ with $s > 0$, in which case we have the easy estimate:
\begin{equation} \label{gamma3}
|\Gamma_{p}(s)| \leq p^{s-1}.
\end{equation}

\begin{theorem} \label{ldstab} For each $t > 0, x \in \Q_{p} \setminus \{0\}$,
$$ f_{t}^{(p)}(x) = \sum_{n=0}^{\infty}\frac{(-1)^{n}}{n!}\frac{(Ct)^{n\gamma}}{|x|_{p}^{n\gamma + 1}}\Gamma(n\gamma + 1).$$
\end{theorem}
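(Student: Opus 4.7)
The approach is a multiplicative change of variable that pulls $|x|_{p}$ out of the exponent, followed by a careful stratification of $\Q_{p}$ by spheres $S_{k} = \{|z|_{p} = p^{k}\}$. First, for $x \neq 0$, I would substitute $z = xy$ in \eqref{dens}. Since $d(x^{-1}z) = |x|_{p}^{-1}dz$ and $\chi_{y}(x) = \chi_{1}(xy)$, this rewrites
\begin{equation*}
f_{t}^{(p)}(x) = \frac{1}{|x|_{p}}\int_{\Q_{p}}\chi_{1}(z)\exp\!\bigl\{-Ct|x|_{p}^{-\gamma}|z|_{p}^{\gamma}\bigr\}dz.
\end{equation*}
Setting $A := Ct|x|_{p}^{-\gamma}$ and splitting the integration region into $\Z_{p}$ and $\Q_{p}\setminus\Z_{p}$, I would treat the two parts separately, since the oscillation $\chi_{1}$ behaves very differently on them.

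On $\Z_{p}$ the character $\chi_{1}$ is identically $1$ and $|z|_{p} \leq 1$, so the Taylor expansion $\exp(-A|z|_{p}^{\gamma}) = \sum_{n}(-A)^{n}|z|_{p}^{n\gamma}/n!$ is bounded by $e^{A}$ uniformly in $z$. Dominated convergence then legitimises termwise integration, and an elementary calculation stratifying $\Z_{p}$ by $\{|z|_{p} = p^{-k}\}_{k \geq 0}$ gives $\int_{\Z_{p}}|z|_{p}^{n\gamma}dz = (1-p^{-1})/(1-p^{-n\gamma-1})$. On $\Q_{p}\setminus\Z_{p}$, I would stratify $\{|z|_{p} > 1\} = \bigcup_{k \geq 1}S_{k}$ and use the standard identity $\int_{|z|_{p}\leq p^{k}}\chi_{1}(z)dz = p^{k}\mathbf{1}_{k \leq 0}$ to deduce that $\int_{S_{k}}\chi_{1}(z)dz$ equals $-1$ for $k = 1$ and $0$ for $k \geq 2$. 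Hence only the single sphere $S_{1}$ contributes and this piece of the integral equals $-e^{-Ap^{\gamma}}$, which I would expand as $-\sum_{n}(-A)^{n}p^{n\gamma}/n!$.

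Assembling the two contributions term by term, the coefficient of $(-A)^{n}/n!$ is
\begin{equation*}
\frac{1-p^{-1}}{1-p^{-n\gamma-1}} - p^{n\gamma} = \frac{1-p^{n\gamma}}{1-p^{-n\gamma-1}},
\end{equation*}
which by \eqref{gamma2} is exactly $\Gamma_{p}(n\gamma+1)$. Restoring $A = Ct|x|_{p}^{-\gamma}$ and the prefactor $|x|_{p}^{-1}$ produces the claimed series expansion.

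The main delicate point is the contribution off $\Z_{p}$: the naive wish to expand the exponential in a Taylor series there fails, because $|z|_{p}^{n\gamma}$ is not integrable on $\{|z|_{p} > 1\}$ for any $n \geq 0$. The rescue is the oscillation cancellation $\int_{S_{k}}\chi_{1}(z)dz = 0$ for all $k \geq 2$, so that the integrand against $\chi_{1}$ is effectively supported on the single sphere $S_{1}$. This is precisely the cancellation that renders the Gel'fand-Graev gamma function finite, and it is consistent with the explicit rational form in \eqref{gamma2}; checking that the elementary identity above does reproduce $\Gamma_{p}(n\gamma+1)$ is the algebraic confirmation that the two approaches agree.
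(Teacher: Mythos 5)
Your proof is correct, and it takes a genuinely different route from the paper's. The paper expands $e^{-Ct|y|_{p}^{\gamma}}$ as a power series directly inside (\ref{dens}), integrates term by term against $\chi_{1}(xy)$ over all of $\Q_{p}$, recognises each term as $|x|_{p}^{-(n\gamma+1)}\Gamma_{p}(n\gamma+1)$ via the scaling of (\ref{gamma1}), and justifies the interchange by ``Fubini'' using the bound (\ref{gamma3}). You instead substitute $z=xy$ first, split $\Q_{p}$ into $\Z_{p}$ and its complement, expand the exponential only on $\Z_{p}$ where $\chi_{1}\equiv 1$ and everything is absolutely convergent, evaluate the complementary piece exactly by the orthogonality identity $\int_{S_{k}}\chi_{1}=0$ for $k\geq 2$ (so only $S_{1}$ survives, giving $-e^{-Ap^{\gamma}}$), and reassemble the two power series in $A$. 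What your route buys is a watertight justification of the interchange of sum and integral: the individual integrals $\int_{\Q_{p}}|\chi_{1}(y)|\,|y|_{p}^{n\gamma}dy$ all diverge, so Fubini--Tonelli does not apply directly to the paper's global expansion, and the estimate (\ref{gamma3}) only controls the sum of the already-regularised terms; your decomposition avoids this entirely, and as a bonus rederives the closed form (\ref{gamma2}) for $\Gamma_{p}(n\gamma+1)$ rather than quoting it. What the paper's route buys is brevity and a transparent conceptual source for the appearance of the Gel'fand--Graev gamma function. One further point in your favour: your computation lands on the coefficient $(-1)^{n}(Ct)^{n}/(n!\,|x|_{p}^{n\gamma+1})\cdot\Gamma_{p}(n\gamma+1)$, which is what the paper's own argument also produces; the exponent $n\gamma$ on $(Ct)$ and the unsubscripted $\Gamma$ in the theorem statement appear to be typographical slips.
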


\begin{proof} By (\ref{dens}) we have the following, where we've chosen $t = 1/C$ for convenience,
\bean f_{t}^{(p)}(x) & = & \int_{\Q_{p}} \chi_{1}(xy) e^{-|y|_{p}^{\gamma}}dy \\
& = & \int_{\Q_{p}} \chi_{1}(xy)\sum_{n=0}^{\infty}\frac{(-1)^{n}}{n!}|y|_{p}^{n\gamma}dy. \eean

However, for $x \neq 0$,
\bean  \sum_{n=0}^{\infty}\frac{(-1)^{n}}{n!} \int_{\Q_{p}} \chi_{1}(xy)|y|_{p}^{n\gamma}dy & = & \sum_{n=0}^{\infty}\frac{(-1)^{n}}{n!}\frac{1}{|x|_{p}^{n\gamma + 1}} \int_{\Q_{p}} \chi_{1}(y)|y|_{p}^{n\gamma}dy \\
& = & \sum_{n=0}^{\infty}\frac{(-1)^{n}}{n!}\frac{1}{|x|_{p}^{n\gamma + 1}}\Gamma(n\gamma + 1). \eean
Now using (\ref{gamma3}), we have
\bean  \sum_{n=0}^{\infty}\frac{1}{n!}\frac{1}{|x|_{p}^{n\gamma + 1}}|\Gamma(n\gamma + 1)| & \leq & \frac{1}{|x|_{p}}\sum_{n=0}^{\infty}\frac{1}{n!}\left(\frac{p}{|x|_{p}}\right)^{n\gamma}\\
& = & \frac{1}{|x|_{p}}\exp{\left\{\frac{p^{\gamma}}{|x|_{p}^{\gamma}}\right\}}, \eean and the result follows by Fubini's theorem.

\end{proof}

It is interesting to compare the series expansions just established for the $p$-adic semistable laws to those obtained on the real line for $\alpha$-stable laws -- see \cite{Fe2}, p.548-9.

We consider probability measures $\mu$ on the ad\`{e}les $\A$ which are defined by analogy with the Bruhat-Schwartz test functions. To be precise, suppose that for each $p \in {\mathcal P}^{*}$ we are given a probability measure $\mu_{p}$ defined on $\Q_{p}$, then we define a positive linear functional $\mu$ on ${\mathcal D}(\A)$ as follows:
\begin{equation} \label{prodmeas}
\mu(\phi) = \mu_{\infty}(\phi_{\infty})\Pi_{p \in {\mathcal P}}\mu_{p}(\phi_{p}),
\end{equation}
for all $\phi \in {\mathcal D}(\A)$ having the form (\ref{bs}). Here we have also assumed that for all but finitely many $p \in \mathcal{P}, \mu_{p}(\phi_{p}) = \int_{\Q_{p}}\phi_{p}(x_{p})\gamma_{p}(x_{p})dx_{p}$. We can in fact extend the domain of definition of $\mu$ in (\ref{prodmeas}) to a wider class of functions on $\A$, in particular to those $\phi$ having the product form (\ref{bs}) wherein $\phi_{p}$ is a bounded measurable function for all $p \in {\mathcal P}^{*}$, with only finitely may of the $\phi_{p}$'s having (supremum) norm exceeding one.  It is then clear that $\mu$ induces a probability measure on $\A$. If $\mu_{p}$ has a pdf $f_{p}$ for all $p \in {\mathcal P}^{*}$, then it is easy to see that $\mu$ has a density $f$ where
\begin{equation} \label{densad}
f(x) = f_{\infty}(x_{\infty})\Pi_{p \in {\mathcal P}}f_{p}(x_{p}),
\end{equation}
where $x = (x_{\infty}, x_{2}, x_{3}, \ldots) \in \A$, and from the definition of $\mu, f_{p} = \gamma_{p}$ for all but finitely many values of $p$. By \cite{RV}, Proposition 5-6 (ii), p.186, the function $f$ is continuous if $f_{p}$ is continuous for all $p \in {\mathcal P}^{*}$ and furthermore, for all $h \in L^{1}(\A, \C)$  of the form (\ref{densad}),
\begin{equation} \label{l1}
 \int_{\A}h(x)dx = \prod_{p \in {\mathcal P}^{*}} \int_{\Q_{p}}h_{p}(x_{p})dx_{p},
 \end{equation}
 whenever the right hand side is finite. Hence for all $y \in \A$, by (\ref{charad}), we have
$$ \widehat{f}(y) = \widehat{f_{\infty}}(y_{\infty})\Pi_{p \in {\mathcal P}}\widehat{f_{p}}(y_{p}).$$

We consider a family of probability measures $(\mu_{t}, t \geq 0)$ on $\A$, where for each $t \geq 0, \mu_{t}$ is of the form (\ref{prodmeas}). For $t > 0$, the finitely many non-Gaussian $\mu_{t}^{(p)}$'s will all be $\gamma$-semistable measures on $\Q_{p}$ (which we previously denoted as $\rho_{t}^{(p)}$), while $\mu_{t}^{\infty}$ is a rotationally invariant $\alpha$-stable measure with $0 < \alpha \leq 2$. For convenience we will take the indices $\gamma$ and $\alpha$,  and the constants $\sigma$ and $C$ to be the same for each of these measures. Then $\mu_{t}$ has a density $f_{t}$ of the form (\ref{densad}) for all $t > 0$.   We will also fix a finite set $S = \{p_{i_{1}}, \ldots, p_{i_{N}}\} \subset {\mathcal P}$ so that for all $t > 0, j = 1, \ldots, N, f_{t}^{(p_{i_{j}})} \neq \gamma_{p_{i_{j}}}$. Using (\ref{prodmeas}), we see that $(\mu_{t}, t \geq 0)$ is an i-convolution semigroup. To check the validity of (\ref{PTF2}), we must verify (A2) and investigate the behaviour of $\sum_{r \in \Q}\widehat{\mu_{t}}(r)$, for $t > 0$.

Now for $p \in S^{c}, \gamma_{p}(r) \neq 0$ if and only if $p \in \Z_{p}$. It follows that for non-vanishing $\widehat{\mu_{t}}(r)$, we must have $r = a/b$, where $a \in \Z, b \in \N$ and the prime factorisation of $|a|$ features only primes in $S^{c}$, while that of $b$ involves only primes in $S$. Let $D:= \{r =a/b; \gamma_{p}(r) \neq 0~\mbox{for all}~p \in S^{c}\}$. Then
$$ \sum_{r \in \Q}\widehat{\mu_{t}}(r) = \sum_{r \in D}\widehat{f_{t}^{(\infty)}}(r)\widehat{f_{t}^{(p_{i_{1}})}}(r)\cdots \widehat{f_{t}^{(p_{i_{N}})}}(r).$$
Now let $D = B \cup B^{c}$, where $B:= \{r = a/b \in D; b = 1\}$. If $r = a/b \in B^{c}$, we must have $b = p_{i_{1}}^{m_{1}}p_{i_{2}}^{m_{2}}\cdots p_{i_{N}}^{m_{N}}$ where $m_{1}, m_{2}, \ldots, m_{N} \in \Z_{+}$ and at least one of these is non-zero. Then we have
\bean \sum_{r \in \Q}\widehat{\mu_{t}}(r) & = & \sum_{r \in B}\widehat{\mu_{t}}(r) + \sum_{r \in B^{c}}\widehat{\mu_{t}}(r)\\
& \leq & \sum_{r \in B}\widehat{f_{t}^{(\infty)}}(r) + \sum_{r \in B^{c}}\widehat{f_{t}^{(\infty)}}(r)\widehat{f_{t}^{(p_{i_{1}})}}(r)\cdots \widehat{f_{t}^{(p_{i_{N}})}}(r). \eean
We have
$$ \sum_{r \in B}\widehat{f_{t}^{(\infty)}}(r) \leq \sum_{n \in \Z}e^{-\sigma t |n|^{\alpha}} < \infty.$$
 Now let $p_{0}:= \min(S)$ and $A \in \N$ be coprime to $p_{0}$, and define $E \subset B^{c}$ by $E:=\{r = a/b \in B^{c}, a = A, b= p_{0}^{n}q, n \in \N\}$. Then there exists $K > 0$ so that
 \bean \sum_{r \in B^{c}}\widehat{\mu_{t}}(r)  & \geq & \sum_{r \in E}\widehat{\mu_{t}}(r) \\
 & = & K \sum_{n=1}^{\infty}e^{-\sigma t (A/p_{0}^{n})^{\alpha}}\sum_{m=1}^{\infty}e^{-C t p_{0}^{m \gamma}}.\eean

Now $\sum_{n=1}^{\infty}e^{-\sigma t (A/p_{0}^{n})^{\alpha}} = \infty$ as $\lim_{n \rightarrow \infty}e^{- \beta/y^{n}} = 1$ for all $\beta > 0, y > 1$. But $0 < \sum_{m=1}^{\infty}e^{-C t p_{0}^{m \gamma}} < \infty$.

We conclude that $\sum_{r \in \Q}\widehat{\mu_{t}}(r) = \infty$, and so the probabilistic trace formulae are not valid in this case. Indeed we have just shown that $\tr(P_{t}) = \infty$ for all $t > 0$. In then follows, by a slight extension of Theorem 5.4.4 in \cite{App} p.144, that the projection of $\mu_{t}$ to ${\mathbb A}/\Q$  cannot have a square-integrable density for any $t > 0$.

It would be interesting to clarify the relationship between our convolution semigroup on $\A$ and the Markov process constructed in \cite{Yas}, which has such interesting connections with number theory (see also \cite{Urb, Yas4}). In fact the processes constructed in those papers have a very similar structure to ours, but they are not of ``Bruhat-Schwartz type'', in that semistable activity is manifest in every $p$-adic component of the ad\`{e}les.

\vspace{5pt}

{\it Acknowledgement.} I am very grateful to Nick Bingham for reading a draft of the manuscript, and providing some very helpful comments.

\end{document}